\newtheorem{observation}{Observation}
\newcommand{\cX}{\mathcal{X}}
\newcommand{\cF}{\mathcal{F}}
\newcommand{\cM}{\mathcal{M}}
\newcommand{\cA}{\mathcal{A}}
\newcommand{\maf}{{\sc MAF}\xspace}
\newcommand{\maaf}{{\sc MAAF}\xspace}
\newcommand{\dfvs}{{\sc DFVS}\xspace}
\newcommand{\poly}{\text{poly}}
\title{Approximation algorithms for nonbinary\\ agreement forests}
\author{Leo van Iersel, Steven Kelk, Nela Leki\'{c}, Leen Stougie\thanks{Leo van Iersel, Leen Stougie and Nela Leki\'{c} were respectively funded by Veni, CLS and Vrije Competitie grants from The Netherlands Organisation for Scientific Research (NWO). Leen Stougie was partially funded by the Tinbergen Institute.}}
\begin{document}

\maketitle

\begin{abstract}
Given two rooted phylogenetic trees on the same set of taxa $X$, the Maximum Agreement Forest problem~(\maf) asks to find a forest that is, in a certain sense, common to both trees and has a minimum number of components. The Maximum Acyclic Agreement Forest problem~(\maaf) has the additional restriction that the components of the forest cannot have conflicting ancestral relations in the input trees. There has been considerable interest in the special cases of these problems in which the input trees are required to be binary. However, in practice, phylogenetic trees are rarely binary, due to uncertainty about the precise order of speciation events. Here, we show that the general, nonbinary version of \maf has a polynomial-time 4-approximation and a fixed-parameter tractable (exact) algorithm that runs in $O(4^k \poly(n))$ time, where $n=|X|$ and~$k$ is the number of components of the agreement forest minus one. Moreover, we show that a $c$-approximation algorithm for nonbinary \maf and a $d$-approximation algorithm for the classical problem Directed Feedback Vertex Set (\dfvs) can be combined to yield a $d(c+3)$-approximation for nonbinary \maaf. The algorithms for \maf have been implemented and made publicly available.
\end{abstract}

\maketitle

\section{Introduction}

\emph{Background.} A rooted phylogenetic tree is a rooted tree with its leaves labelled by species, or strains of species, more abstractly
known as \emph{taxa}. Arcs, also called edges, are directed away from the root towards the leaves and internal vertices of the tree have indegree~1. It is a model used to exhibit ancestral relations between the species. For an
introduction to phylogenetic trees see \cite{MathEvPhyl,reconstructingevolution,SempleSteel2003}.

Occasionally it happens that on the same set of species topologically distinct phylogenetic trees are derived from different data sources (e.g. different genes). Such partly incompatible trees may arise due to biological reticulation events such as hybridization, recombination or lateral gene transfer \cite{HusonRuppScornavacca10,davidbook,Nakhleh2009ProbSolv}. These events cannot be explained in a tree-like ancestral relation model. There are additionally many non-reticulate biological phenomena, such as incomplete lineage sorting, that can likewise lead to conflicting tree signals  \cite{davidbook,Nakhleh2009ProbSolv}. Whatever the cause of the conflict, it is natural to wish to quantify the dissimilarity of two phylogenetic trees.

\emph{rSPR distance and Maximum Agreement Forests}. One
such measure of dissimilarity is the rooted Subtree Prune and Regraft (rSPR) distance, which asks for the minimum number of subtrees that need to be successively detached and re-attached in one of the trees to transform it into the other. The search for an alternative characterisation of rSPR distance was a major motivation behind the study of the agreement forest problem that
we now describe; a formal definition follows in the next section. We are given two rooted trees with the leaves labeled by the elements of a set~$X$ and no vertices with indegree and outdegree both equal to~1. An \emph{agreement
forest} is a partition of $X$ such that (a) in both trees the partition induces edge-disjoint subtrees and (b) for each block (``component'') of the partition, the two subtrees induced are 
phylogenetically compatible i.e. have a \emph{common
refinement}; see Figure~\ref{fig:maf}. The {\em maximum agreement forest problem} (\maf) is to find an agreement forest with a minimum number of components.

\clearpage

The binary \maf problem, in which the two input phylogenetic trees are binary, was introduced by Hein et al.~\cite{hein96}. Bordewich and Semple~\cite{BS05} showed that modulo a minor rooting technicality it is equivalent to computing the rSPR distance. The first correct polynomial-time approximation algorithm for this problem was the 5-approximation algorithm by Bonet et al.~\cite{bonet}. The first 3-approximation was given in~\cite{rsprFPT}, a quadratic-time 3-approximation in~\cite{3approxRSPR} and a linear-time 3-approximation in~\cite{whiddenFixed}. A fixed parameter tractable (FPT) algorithm for binary \maf was first given by Bordewich and Semple~\cite{BS05}, the running time of which was subsequently improved in~\cite{rsprFPT} to $O(4^kk^4+n^3)$ and in~\cite{whiddenFixed} to $O(2.42^kn)$. (See \cite{Flum2006,niedermeier2006} for an introduction to fixed parameter tractability). For more than two binary trees, there exists an 8-approximation~\cite{chataigner}.

However, in applied phylogenetics it rarely happens that trees are binary. Often there is insufficient information available to be able to determine the exact order in which several branching events occurred, and it is standard practice to model this uncertainty using vertices with outdegree higher than two: a \emph{soft polytomy} \cite{davidbook}. Hence it is extremely important to develop algorithms for the \emph{nonbinary} case i.e. when the trees are not necessarily binary and high-degree vertices capture a set of equally likely branching scenarios. The close relationship between rSPR distance and MAF also holds in the nonbinary case~\cite{josh}. However, compared to the binary case neither problem has been well-studied. Nonbinary agreement forests were introduced in~\cite{3approxRSPR}, where a $(d+1)$-approximation algorithm for nonbinary \maf was given, with~$d$ the maximum outdegree of the input trees. A kernelization was presented in~\cite{josh}, showing that nonbinary \maf is fixed-parameter tractable. Combining this kernelization of size~$64k$ with the trivial $O(n^k\text{poly}(n))$ time exact algorithm gives a $O((64k)^k\poly(n))$ time FPT algorithm, with~$n$ the number of species and~$k$ the size of the maximum agreement forest minus one. In addition, the kernel automatically implies the existence of a polynomial-time 64-approximation algorithm (since any instance trivially has a solution of size~$n$).

In this article we give a polynomial-time 4-approximation algorithm and an $O(4^k \poly(n))$ time FPT algorithm for nonbinary \maf. Both algorithms have been implemented and made publicly available. Although these results are of interest in their own right, we also show how to utilize them to obtain improved algorithms for computation of hybridization number.

\emph{Hybridization number and Maximum Acyclic Agreement Forests}. The other problem we study is a variation of \maf in which the roots of the subtrees in the agreement forest are not allowed to have conflicting ancestral relations in the two input phylogenetic trees. For an example, consider the agreement forest in Figure~\ref{fig:maf}. In the first phylogenetic tree, the subtree with leaves~$c$ and~$d$ is ``above'' the subtree with leaves~$a$ and~$b$, whereas it is the other way around in the second phylogenetic tree. By saying that a subtree is ``above'' another subtree, we mean that there exists a directed path from the root of the first to the root of the second subtree that contains at least one edge of the first subtree. Hence, the agreement forest in Figure~\ref{fig:maf} is, in this sense, cyclic. Such cyclic phenomena are prohibited in the {\em Maximum Acyclic Agreement Forest problem} \maaf, introduced in \cite{baroni05}. The main reason for studying MAAF, is its close connection with the \emph{hybridization number problem} (HN). A hybridization network (often also called a rooted phylogenetic network) is a rooted phylogenetic tree that additionally may contain \emph{reticulations}, vertices with indegree two or higher. Given two rooted rooted trees on the same set of taxa, the HN problem asks for a hybridization network with a minimum number of reticulations that displays (i.e. contains embeddings of) both the input trees. The HN problem can thus be viewed as constructing a ``most parsimonious'' explicit evolutionary hypothesis to explain the dissimilarity between the two input trees; the problem first gained attention following the publication of several seminal articles in 2004-5 e.g. \cite{baroni05,BaroniEtAl2004}. The number of reticulations in an optimal solution to the HN problem is exactly one less than the number of components in an optimal solution to the MAAF problem \cite{baroni05}, making the problems essentially equivalent.

The problem \maaf is NP-hard and APX-hard \cite{bordewich}, although there do exist efficient FPT algorithms for the binary variant of the problem e.g.
\cite{bordewich2,sempbordfpt2007,hybridnet,quantifyingreticulation,firststeps,whiddenFixed}. The
nonbinary variant of the problem has received comparatively little attention, although that too is FPT \cite{linzsemple2009,teresaFPT} and some algorithms have been implemented (see \cite{teresaFPT} for a discussion). In both cases, the practical applicability of the FPT algorithms is limited to instances of small or moderate size \cite{practicalCycleKiller}, for larger instances approximation algorithms are required. In that regard, it was recently shown in \cite{approximationHN} that, from an approximation perspective, binary MAAF is a very close relative of the classical problem {\em directed feedback vertex set} (DFVS), whose definition is presented in Section~\ref{sec:prelim}. Unfortunately, it is still a major open problem in approximation complexity whether DFVS permits a constant-factor polynomial-time approximation algorithm. The nonbinary variant of \maaf is of course at least as hard to approximate as the binary variant and is thus at least as hard to approximate as \dfvs. Hence, for large instances of \maaf, neither FPT algorithms nor polynomial-time approximation algorithms are, at the present time, appropriate. In \cite{practicalCycleKiller} we showed how, for the binary variant of MAAF, large instances \emph{can} however be very well approximated using a specific marriage of MAF and DFVS solvers. This approach is exponential-time in the worst case but in practice is very fast and yields highly competitive approximation factors. The question remained whether a similar approach would work for \emph{nonbinary} MAAF.
 
In this paper, we show that a $c$-approximation algorithm for nonbinary \maf and a $d$-approximation algorithm for \dfvs can be combined to yield a $d(c+3)$-approximation for nonbinary \maaf. Combining this with our aforementioned polynomial-time 4-approximation for \maf, we obtain a 7$d$-approximation for \maaf. As we discuss in the conclusion, it is likely that in practice $d=1$ can be obtained using ILP to solve the generated \dfvs instances. Hence, using the 4-approximation for nonbinary \maf we get an exponential-time 7-approximation for nonbinary \maaf and using the FPT algorithm for nonbinary \maf we get an exponential-time 4-approximation for nonbinary \maaf. If we wish a purely polynomial-time approximation, we can use the best-known polynomial-time approximation algorithm for \dfvs, yielding overall a polynomial-time $O(\log(k)\log(\log(k)))$-approximation for nonbinary \maaf, with~$k$ the size of a maximum acyclic agreement forest minus one. We mention here that our algorithms for \maf are partly based on the ideas in~\cite{whiddenFixed} and the algorithm for \maaf on the ideas in~\cite{practicalCycleKiller} but the analysis is different in both cases because nonbinary agreement forests are substantially different from binary agreement forests. In fact, high-outdegree vertices pose a formidable challenge because of the freedom to refine these vertices in one of exponentially many ways.

\begin{figure}
    \centering
    \includegraphics[scale=.8]{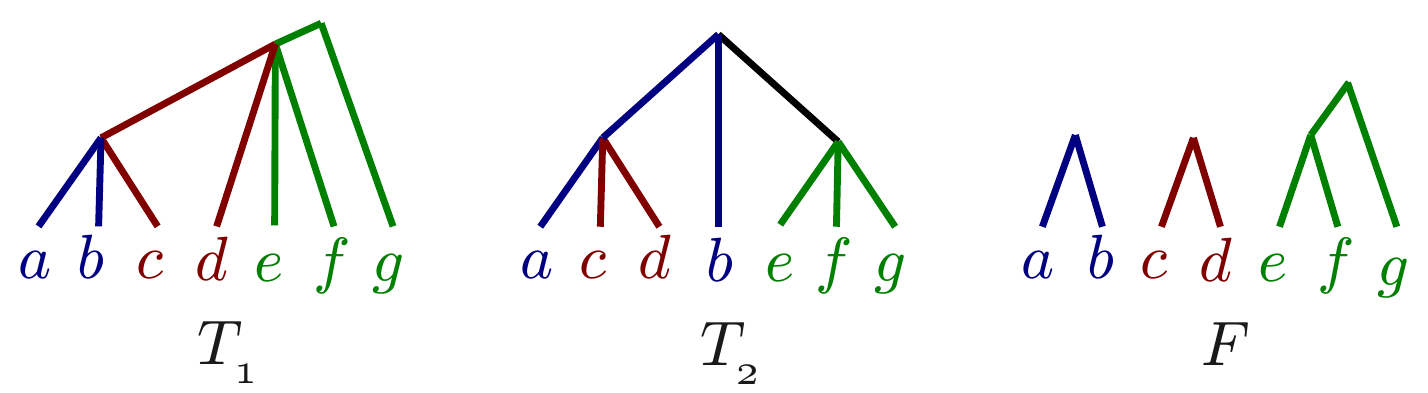}
    \caption{Two nonbinary rooted phylogenetic trees~$T_1$ and~$T_2$ and a maximum agreement forest~$F$ of~$T_1$ and~$T_2$.\label{fig:maf}}
\end{figure}

An implementation of both our \maf algorithms in Java has been made publicly available~\cite{mafprogram}.

\section{Preliminaries and statement of results}\label{sec:prelim}
Let~$X$ be a finite set (e.g. of species). A \emph{rooted phylogenetic} $\cX$-\emph{tree} is a rooted tree with no vertices with indegree-1 and outdegree-1, a root with indegree-0 and outdegree at least~2, and leaves bijectively labelled by the elements of~$X$. We identify each leaf with its label. We henceforth call a rooted phylogenetic $X$-tree a \emph{tree} for short. Note that we do not restrict phylogenetic trees to be binary. A tree~$T$ is a \emph{refinement} of a tree~$T'$ if~$T'$ can be obtained from~$T$ by contracting edges. For a tree~$T$ and a set $X'\subset X$, we define $T(X')$ as the minimal subtree of~$T$ that contains all elements of~$X'$, and $T|X'$ as the result of suppressing all vertices with in- and outdegree~1 of~$T(X')$. The set of leaves of a tree~$T$ is denoted~$L(T)$. We say that tree~$T'$ is displayed by tree~$T$ if~$T'$ can be obtained from a subtree of~$T$ by contracting edges. If~$T'$ is displayed by~$T$, then $T(L(T'))$ is the \emph{embedding} of~$T'$ in~$T$.

Throughout the paper, we usually refer to directed edges (arcs) simply as edges and to directed paths simply as paths. If~$e=(u,v)$ is an edge of some tree, then we say that~$v$ is a \emph{child} of~$u$, that~$u$ is the \emph{parent} of~$v$, that~$u$ is the \emph{tail} of~$e$, that~$v$ is the \emph{head} of~$e$ and we write $tail(e)=u$ and $head(e)=v$.

A \emph{forest} is defined as a set of trees. To avoid confusion, we call each element of a forest a \emph{component}, rather than a tree. Let~$T$ be a tree and~$\cF$ a forest. We say that~$\cF$ is a \emph{forest for}~$T$ if:
\begin{itemize}
\item each component~$F\in\cF$ is a refinement of $T|L(F)$;
\item the subtrees $\{T(L(F))\mid F\in \cF\}$ are edge-disjoint; and
\item the union of $L(F)$ over all $F\in \cF$ is equal to~$L(T)$.
\end{itemize}

By this definition, if~$\cF$ is a forest for some tree~$T$, then $\{L(F)\mid F\in\cF\}$ is a partition of the leaf set of~$T$. It will indeed sometimes be useful to see an agreement forest as a partition of the leaves, and sometimes to see it as a collection of trees.

If~$T_1$ and~$T_2$ are two trees, then a forest~$\cF$ is an \emph{agreement forest} of~$T_1$ and~$T_2$ if it is a forest for~$T_1$ and a forest for~$T_2$. The \emph{size} of a forest~$\cF$, denoted by~$|\cF|$, is defined as the number of its components. We consider the following computational problem. \\

\noindent{\bf Problem:} {\sc Nonbinary Maximum Agreement Forest} (Nonbinary \maf)\\
\noindent {\bf Instance:} Two rooted phylogenetic trees~$T_1$ and~$T_2$. \\
\noindent {\bf Solution:} An agreement forest~$\cF$ of~$T_1$ and~$T_2$.\\
\noindent {\bf Objective:} Minimize~$|\cF|-1$.\\

We define the \emph{inheritance graph} $IG(T_1,T_2,\cF)$ of an agreement forest, as the directed graph whose vertices are the components of~$\cF$ and which has an edge~$(F,F')$ precisely if either
\begin{itemize}
\item there exists a path in~$T_1$ from the root of $T_1(L(F))$ to the root of $T_1(L(F'))$ containing an edge of $T_1(L(F))$ or;
\item there exists a path in~$T_2$ from the root of $T_2(L(F))$ to the root of $T_2(L(F'))$ containing an edge of $T_2(L(F))$.
\end{itemize}

Note that if there exists a path in~$T_i$ (for $i\in\{1,2\}$) from the root of $T_i(L(F))$ to the root of $T_i(L(F'))$ containing an edge of $T_i(L(F))$, then this directly implies that this path also contains such an edge that has the root of $T_i(L(F))$ as tail.

An agreement forest~$\cF$ of~$T_1$ and~$T_2$ is called an \emph{acyclic agreement forest} if the graph $IG(T_1,T_2,\cF)$ is acyclic.

We call a forest an {\em $\cF$-splitting} if it is an acyclic agreement forest that can be obtained from a refinement of~$\cF$ by removing edges and suppressing vertices with in- and outdegree~1.

A \emph{maximum acyclic agreement forest} (\maaf) of~$T_1$ and~$T_2$ is an acyclic agreement forest of~$T_1$ and~$T_2$ with a minimum number of components. \\

\noindent{\bf Problem:} {\sc Nonbinary Maximum Acyclic Agreement Forest} (Nonbinary \maaf)\\
\noindent {\bf Instance:} Two rooted phylogenetic trees~$T_1$ and~$T_2$. \\
\noindent {\bf Solution:} An acyclic agreement forest~$\cF$ of~$T_1$ and~$T_2$.\\
\noindent {\bf Objective:} Minimize~$|\cF|-1$.\\

We use the notation MAF$(T_1,T_2)$ and MAAF$(T_1,T_2)$ for the optimal objective value of, respectively, a maximum agreement forest and a maximum acyclic agreement forest for input trees~$T_1$ and~$T_2$. Hence, if~$\cA$ is a maximum agreement forest and~$\cM$ is a maximum acyclic agreement forest, then MAF$(T_1,T_2)=|\cA|-1$ and MAAF$(T_1,T_2)=|\cM|-1$.

We note that in the MAF and MAAF literature it is commonplace to assume that there is a leaf labelled $\rho$ attached to the root of each input tree: see \cite{approximationHN} and earlier articles. In this article we omit $\rho$. The extra leaf $\rho$ was originally introduced into the binary MAF literature to ensure that optimum solutions to MAF correctly correspond to optimum solutions to the rSPR problem~\cite{BS05}. If one defines MAF, as we do, without $\rho$, then we can easily simulate the involvement of $\rho$ by introducing a new taxon $x' \not \in X$ which is a child of the root in both the input trees. The MAAF literature grew out of the MAF literature and thus inherited the use of $\rho$. However, for MAAF $\rho$ is not necessary at all, and this is tightly linked to the acyclic character of the inheritance graph. Although we omit the proof, it is easy to show that if we have a solution to MAAF in which $\rho$ appears on its own in an isolated component, we can obtain a better solution by grafting $\rho$ onto any of the components $C$ that have indegree~0 in the inheritance graph. This can create new outgoing edges from $C$ in the inheritance graph, but it cannot create new incoming edges, and hence preserves the acyclicity of the graph. For both these reasons (the ease with which $\rho$ can be simulated in the case of MAF, and its redundancy in the case of MAAF) we choose to omit $\rho$ from this paper.

The \maaf problem is closely related to an important problem in phylogenetics. A \emph{rooted phylogenetic network} is a directed acyclic graph with no vertices with indegree~1 and outdegree~1 and leaves bijectively labelled by the elements of~$X$. Rooted phylogenetic networks, will henceforth be called \emph{networks} for short in this paper.  A tree~$T$ is \emph{displayed} by a network~$N$ if~$T$ can be obtained from a subtree of~$N$ by contracting edges. Using~$\delta^-(v)$ to denote the indegree of a vertex~$v$, a \emph{reticulation} is a vertex~$v$ with~$\delta^-(v)\geq 2$. The \emph{reticulation number} of a network~$N$ is given by

\[
r(N)=\sum_{v: \delta^-(v)\geq 2}(\delta^-(v)-1).
\]

In \cite{baroni05} it was shown that, in the binary case, the optimum to {\sc MAAF} is equal to the optimum of the following problem. Later, in \cite{linzsemple2009}, this characterisation
was extended to nonbinary trees.

\noindent{\bf Problem:} {\sc Nonbinary Minimum Hybridization} (Nonbinary MH)\\
\noindent {\bf Instance:} Two rooted phylogenetic trees $T_1$ and $T_2$. \\
\noindent {\bf Solution:} A phylogenetic network~$N$ that displays $T_1$ and $T_2$.\\
\noindent {\bf Objective:} Minimize~$r(N)$.\\

Moreover, it was shown that, for two trees~$T_1,T_2$, \emph{any} acyclic agreement forest for~$T_1$ and~$T_2$ with~$k+1$ components can be turned into a phylogenetic network that displays~$T_1$ and~$T_2$ and has reticulation number~$k$, and vice versa. Thus, any approximation for Nonbinary \maaf gives an approximation for nonbinary {\sc MH}.

There is also a less obvious relation between \maaf and the \emph{directed feedback vertex set problem}, a problem which is well-known in the communities of theoretical computer science and combinatorial optimisation. A feedback vertex set of a directed graph is a subset of the vertices that contains at least one vertex of each directed cycle. Equivalently, a subset of the vertices of a directed graph is a \emph{feedback vertex set} if removing these vertices from the graph makes it acyclic.\\

\noindent{\bf Problem:} Directed Feedback Vertex Set (\dfvs)\\
\noindent {\bf Instance:} A directed graph~$D$. \\
\noindent {\bf Solution:} A feedback vertex set~$S$ of $D$. \\
\noindent {\bf Objective:} Minimize~$|S|$.\\

In the next two sections we will prove the following theorems. In Section~3, we relate approximability of Nonbinary \maaf to that of Nonbinary \maf and \dfvs.

\begin{theorem}
\label{thm:maafapprox}
If there exists a $c$-approximation for Nonbinary \maf and a $d$-approximation for \dfvs, then there exists a $d(c+3)$-approximation for Nonbinary \maaf and hence for Nonbinary {\sc MH}.
\end{theorem}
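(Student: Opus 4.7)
The plan is to combine the two given approximation algorithms in a three-step procedure and then bound the size of the resulting forest. First, apply the $c$-approximation for Nonbinary \maf to $(T_1,T_2)$ to obtain an agreement forest $\cF$; since every acyclic agreement forest is in particular an agreement forest, $\text{MAF}(T_1,T_2)\le \text{MAAF}(T_1,T_2)$, and hence $|\cF|-1\le c\cdot \text{MAAF}(T_1,T_2)$. Second, build the inheritance graph $IG(T_1,T_2,\cF)$ and apply the $d$-approximation for \dfvs to obtain a feedback vertex set $S\subseteq \cF$. Third, replace each component $C\in S$ by a carefully chosen splitting of $C$ (a refinement of $C$ from which one edge is removed) to produce a forest $\cF'$; one must then verify that $\cF'$ is an acyclic agreement forest of $T_1$ and $T_2$.

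The heart of the analysis is a structural lemma stating that $IG(T_1,T_2,\cF)$ admits a feedback vertex set of size at most $3\cdot\text{MAAF}(T_1,T_2)$. I would prove it by fixing an optimal \maaf $\cM$ and explicitly exhibiting such an FVS in $\cF$: every cycle in the inheritance graph is witnessed by a pair of components whose ancestral positions in $T_1,T_2$ mutually conflict, and because $\cM$ is acyclic any such pair must have its leaf set split across multiple blocks of $\cM$; a charging argument then bounds the FVS size by at most three times $|\cM|-1=\text{MAAF}(T_1,T_2)$. In parallel, I would show that the splitting in step three contributes at most one new component per element of $S$, so $|\cF'|\le|\cF|+|S|$.

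Putting the pieces together gives $|\cF'|-1\le(|\cF|-1)+|S|\le c\cdot\text{MAAF}(T_1,T_2)+3d\cdot\text{MAAF}(T_1,T_2)=(c+3d)\cdot \text{MAAF}(T_1,T_2)\le d(c+3)\cdot \text{MAAF}(T_1,T_2)$, where the last inequality uses $d\ge 1$. The guarantee for Nonbinary {\sc MH} follows from the one-to-one correspondence between acyclic agreement forests and hybridization networks. The main obstacle I anticipate is the structural lemma in the nonbinary setting: a high-outdegree vertex of $T_1$ or $T_2$ admits exponentially many refinements, so one has to show that a single fixed refinement simultaneously (i) witnesses the charging of the FVS against $\cM$ and (ii) is compatible with the local splittings of step three, so that $\cF'$ remains an agreement forest with edge-disjoint embeddings in both input trees while every cycle of $IG(T_1,T_2,\cF)$ passing through $S$ is destroyed.
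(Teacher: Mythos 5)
Your high-level plan (MAF approximation, then a DFVS call, then a bound of the form $c+3d$ or $d(c+3)$ times $\mathrm{MAAF}$) matches the paper's, but the middle of your argument has a genuine gap that the paper's proof is specifically engineered to avoid. You propose to run the \dfvs approximation on the inheritance graph $IG(T_1,T_2,\cF)$, obtain a feedback vertex set $S$ of \emph{components}, and then ``neutralize'' each $C\in S$ by a single split, claiming $|\cF'|\le|\cF|+|S|$. This step does not work: splitting $C$ into two pieces does not delete $C$ from the inheritance graph, it replaces it by two new vertices $C_1,C_2$, each of which can still have conflicting ancestral relations with other components (and with pieces of other split components), so new directed cycles through $C_1$ or $C_2$ can survive. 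In general a single component may need to be shattered into many pieces before all cycles through it disappear, so neither the acyclicity of $\cF'$ nor the bound $|\cF'|\le|\cF|+|S|$ follows. This is exactly why the paper does \emph{not} reduce to \dfvs on the inheritance graph: it builds a bipartite digraph $D$ whose vertices are the \emph{internal vertices and edges of the forest} $\cA$ itself, proves that feedback vertex sets of $D$ correspond exactly to $\cA$-splittings, and introduces a weight function ($\delta^+_\cA(v)-1$ on vertex-vertices, $1$ on edge-vertices) so that the weight of a proper FVS equals the number of extra components created. The approximation factor $d$ then transfers cleanly.

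Two further points. First, your ``structural lemma'' corresponds to the paper's Theorem~\ref{thm:3maaf} (an $\cA$-splitting of size at most $|\cA|+3|\cM|$ exists), but its proof requires $\cA$ to be \emph{maximal} (no two components mergeable): maximality is what guarantees, via Observation~\ref{obs:merge} and Lemma~\ref{lem:maximal}, that no four unmarked components share a vertex in both trees, and hence that each lowest unmarked component of $\cM$ properly intersects at most three components of $\cA$. Your sketch neither imposes maximality on the MAF output nor gives the ``lowest unmarked component'' induction that makes the charging argument go through; as stated (``every cycle is witnessed by a pair of mutually conflicting components'') it also overlooks cycles of length greater than two in the inheritance graph. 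Second, your final arithmetic $(c+3d)\le d(c+3)$ is fine as far as it goes, but it rests entirely on the unsupported claim that one split per FVS element suffices; with the paper's reduction the $d$ factor instead multiplies the optimal \emph{splitting weight}, which is where the $d(c+3)$ bound actually comes from.
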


In Section~4, we design a polynomial-time approximation algorithm for (Nonbinary) \maf and prove constant factor approximability.

\begin{theorem}\label{thm:maf}
There is a polynomial-time 4-approximation for (nonbinary) \maf.
\end{theorem}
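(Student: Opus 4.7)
The plan is to design a bottom-up algorithm that maintains a partial agreement forest and iteratively either merges compatible structures (without cost) or cuts a small constant number of edges (charged against a forced cut of $OPT$). Following the spirit of~\cite{whiddenFixed}, the basic primitive is to pick a pair of sibling leaves $\{a,b\}$ in $T_1$, i.e.\ two leaves sharing a common parent, and compare the position of $a$ and $b$ in $T_2$. If the trees agree on this pair in a suitable sense (for instance if $a$ and $b$ are also siblings in $T_2$, or the subtree of $T_2$ containing $a$ and $b$ embeds cleanly into the current forest), then we ``merge'' $a$ and $b$ into a single compound leaf and recurse on the reduced instance. Otherwise we are in a conflict case, and the algorithm identifies a small set $S$ of at most $4$ edges in $T_1 \cup T_2$ such that every agreement forest must cut at least one edge of $S$; the algorithm cuts all of $S$, creating at most $4$ new components, and recurses.

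The correctness of the merging step follows from a standard exchange argument: if $a$ and $b$ really agree at the bottom of both trees, there is an optimal agreement forest in which $a$ and $b$ lie in the same component, so we do not lose anything by committing to this. The correctness of the $4$-approximation ratio then follows from a charging scheme: every time the algorithm creates $t\le 4$ new components in a cutting step, it is guaranteed that $OPT$ also must split at the corresponding location, contributing at least one new component. Summing over all cutting steps, the number of components in the algorithm's forest is at most $4$ times the number of components in $OPT$, giving $|\cF_{\text{alg}}|-1 \le 4(|\cF_{OPT}|-1)$ after adjusting for the trivial ``one free component''.

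The main obstacle is precisely the nonbinary case analysis that the authors flag in the introduction: a high-outdegree vertex in $T_1$ or $T_2$ can be refined in exponentially many ways by an agreement forest, so one cannot just mimic the binary cherry analysis. When $a$ and $b$ are siblings in $T_1$ but the parent of $a$ in $T_2$ is a polytomy $v$, the children of $v$ in $T_2$ may induce many candidate subtrees, only some of which are compatible with the rest of the forest, and it is not immediately clear which constant-sized set of edges to cut. The heart of the proof will be to enumerate a fixed number of structural subcases (say, based on whether the parent of $a$ and the parent of $b$ in $T_2$ coincide or are comparable along a directed path, and on whether other leaves intervene) and to show that in every subcase one can pinpoint at most $4$ ``critical'' edges, one of which must be cut by every agreement forest. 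Once this local lemma is established, the global $4$-approximation bound follows immediately from the charging argument, and the polynomial running time is clear since each iteration reduces the total number of leaves or edges by at least one.
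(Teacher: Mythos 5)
Your framework is the same as the paper's (a bottom-up sibling analysis on $T_1$, at most four cuts per iteration, one of which is charged to a forced cut of the optimum), but the proposal stops exactly where the proof has to begin: you state that ``the heart of the proof will be to enumerate a fixed number of structural subcases and to show that in every subcase one can pinpoint at most 4 critical edges, one of which must be cut by every agreement forest,'' and you do not carry out that enumeration. That case analysis is the entire mathematical content of the theorem; without it there is no proof, only a plan. The concrete ingredients you are missing are the following. First, the cut primitive itself must be redefined for nonbinary trees: a cut is allowed to first \emph{refine} a polytomy and then delete an edge, so that an arbitrary subset of the children of a high-degree vertex can be split off; without this the algorithm cannot even express the separations it needs. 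Second, the right local structure is not an arbitrary pair of sibling leaves $\{a,b\}$ of $T_1$ but the \emph{entire} child set $C$ of an internal vertex $u$ of $T_1$ all of whose children are leaves, together with a specific selection rule inside $C$: among pairs $c_1,c_2\in C$ lying in the same component of the current forest $F_2$, take one whose lowest common ancestor in that component is as deep as possible, preferring pairs for which neither $c_i$ is a child of that ancestor. This selection rule is what guarantees that the subtrees $S_1,S_2$ hanging off the parents of $c_1$ and $c_2$ contain no further elements of $C$, i.e.\ $S_1,S_2\subseteq\bar{C}$, and that fact is what makes one of the four cuts unavoidable. Third, the unavoidability argument itself rests on an edge-disjointness observation in $T_1$ that you never invoke: any component of an agreement forest containing both an element of $C$ and an element of $\bar{C}$ must use the unique edge entering $u$ in $T_1$, so at most one such component exists, and hence at most one of $c_1,c_2$ can share a component with anything in $S_1\cup S_2$. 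Without these three pieces the charging scheme is an unsupported assertion, precisely in the polytomy situation you yourself identify as the obstacle.

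Two smaller deviations are also worth flagging. The algorithm should cut only $T_2$ (more precisely, the evolving forest $F_2$ obtained from $T_2$), never edges ``in $T_1\cup T_2$'': keeping $T_1$ intact is what lets one test termination by asking whether $F_2$ has become a forest for $T_1$, and what keeps the accounting one-sided. And the degenerate situations need their own treatment with a different ratio argument: when some $c\in C$ is already isolated it is removed for free, and when all elements of $C$ lie in distinct components of $F_2$ one cuts off all of them at once and argues that at most one of these cuts is avoidable (so at least half are forced), rather than exactly one out of four. Your merging step (collapsing $c_1,c_2$ with a common parent in both trees into a compound leaf) does match the paper's first reduction and is fine.
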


Moreover, the proof of Theorem~\ref{thm:maf} almost directly leads to fixed-parameter tractability.

\begin{theorem}\label{thm:fpt}
Nonbinary \maf can be solved exactly in $O(4^k \poly(n))$ time, with~$n$ the number of leaves and~$k$ the number of components of a maximum agreement forest minus one.
\end{theorem}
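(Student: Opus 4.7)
The plan is to convert the 4-approximation algorithm of Theorem~\ref{thm:maf} into a branching FPT algorithm by replacing each ``pick one option'' step of the approximation with a ``try all four options'' branching step. The approximation algorithm presumably repeatedly identifies a small local conflict between~$T_1$ and~$T_2$ and resolves it by making a cut; its analysis shows that at each conflict there are at most four canonical repair moves and that every feasible agreement forest must perform at least one of these moves, while charging at most 4 cuts per unit of the optimum (hence factor 4). I would invoke exactly the same local conflict detection, but instead of committing to one repair move I would recursively solve each of the (at most) four resulting subinstances, returning the best solution found across all branches.

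First, I would formalise the branching rule: at every recursive call, if no conflict remains, the current forest is already an agreement forest and we return it. Otherwise, pick the first conflict that the approximation algorithm would act on, enumerate the $\le 4$ repair moves it considers, and for each move apply the corresponding edge deletions and component-splitting operations to produce a smaller instance (with one more component already fixed). Second, I would bound the recursion depth: the key invariant, inherited from the proof of Theorem~\ref{thm:maf}, is that each repair move increases the number of components of the forest being built by at least one, equivalently consumes at least one unit of~$k$. Hence every root-to-leaf path in the search tree has length at most~$k+1$, so the tree has at most~$4^{k+1}$ leaves.

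Third, I would verify that the branching is \emph{exhaustive}: every optimal agreement forest~$\cA$ of~$T_1$ and~$T_2$ is consistent with some choice at each branching step. This is the step I would inherit most carefully from the 4-approximation analysis — the proof of Theorem~\ref{thm:maf} already argues, for the selected local conflict, that any valid agreement forest must perform at least one of the four enumerated repair moves on that conflict, because the conflict cannot survive in~$\cA$. Combined with the depth bound, this shows that some root-to-leaf path in the branching tree produces~$\cA$, so the overall minimum returned equals $\text{MAF}(T_1,T_2)$.

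Finally, I would account for the per-node work: detecting the next conflict, computing the four child instances (updating $T_1|L(F)$, $T_2|L(F)$, edge-disjointness bookkeeping, and common-refinement checks), and recording the current partial forest can all be done in $\poly(n)$ time using the same data structures as in the approximation. Multiplying through yields the claimed $O(4^k\poly(n))$ bound. The main obstacle I anticipate is the third step: high-outdegree vertices admit exponentially many refinements, so showing that a \emph{constant} number of repair moves suffices to capture every optimal forest requires exploiting the same combinatorial lemma about soft polytomies that underlies Theorem~\ref{thm:maf}; once that lemma is in hand, the FPT upgrade is essentially mechanical.
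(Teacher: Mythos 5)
Your proposal is essentially the paper's own proof: branch on the (at most four) cuts that the approximation algorithm would consider, use the ``at least one cut is unavoidable'' claims from the proof of Theorem~\ref{thm:maf} to guarantee that some branch preserves the optimum, and observe that each branch makes exactly one cut so the search tree has depth at most~$k$ and at most $4^k$ nodes, each processed in $\poly(n)$ time. The one place where your ``mechanically replace each choice by a four-way branch'' recipe does not directly apply is the case where all children of the chosen cherry-vertex~$u$ of~$T_1$ lie in different components of~$F_2$ (Case~0c): there the approximation deterministically isolates \emph{all} of~$C$ at once, which is not a choice among four moves and is not exact, since one element of~$C$ may be a non-singleton in the optimum. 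The paper patches this by picking two arbitrary elements $c_1,c_2\in C$ and branching two ways on which of them becomes a singleton, which is justified by the same observation you cite (at most one element of~$C$ can avoid being a singleton); with that adjustment your argument goes through unchanged.
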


Combining Theorems~\ref{thm:maafapprox} and~\ref{thm:maf} above, we obtain the following corollary.

\begin{corollary}
If there exists a $d$-approximation for \dfvs then there exists a $7d$-approximation for Nonbinary \maaf and hence for Nonbinary {\sc MH}.
\end{corollary}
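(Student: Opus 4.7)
The statement is a direct composition of the two theorems just proved, so the plan is essentially bookkeeping rather than new argument. First I would invoke Theorem~\ref{thm:maf} to fix a concrete value $c=4$ for which a polynomial-time $c$-approximation of Nonbinary \maf is available. Next I would feed this $c$, together with the assumed $d$-approximation for \dfvs, into Theorem~\ref{thm:maafapprox}, which outputs a $d(c+3)$-approximation for Nonbinary \maaf; substituting $c=4$ gives the claimed $d(4+3)=7d$ factor.

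The only remaining point is the ``and hence for Nonbinary \textsc{MH}'' clause. Here I would cite the equivalence between \maaf and \textsc{MH} recorded after the statement of Nonbinary \textsc{MH}: by \cite{baroni05} and its nonbinary extension \cite{linzsemple2009}, for any two trees $T_1,T_2$ one has $\text{MAAF}(T_1,T_2)=r(N^*)$ for an optimal network $N^*$, and moreover any acyclic agreement forest with $k+1$ components can be turned (in polynomial time) into a network with reticulation number $k$ that displays $T_1$ and $T_2$. Consequently, given the $7d$-approximate acyclic agreement forest produced above, the conversion yields a network whose reticulation number is at most $7d$ times the optimum of Nonbinary \textsc{MH}, which finishes the corollary.

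There is no real obstacle: both pieces of the composition are already established, and the MAAF-to-MH transfer is a standard polynomial-time construction that preserves the approximation factor additively in no way, since the numerical values $|\cF|-1$ and $r(N)$ coincide. The proof is thus essentially a single sentence, and I would present it as such.
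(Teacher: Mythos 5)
Your proposal is correct and matches the paper exactly: the paper simply combines Theorem~\ref{thm:maafapprox} with the $c=4$ from Theorem~\ref{thm:maf} to get $d(4+3)=7d$, and the transfer to Nonbinary {\sc MH} rests on the stated equivalence between acyclic agreement forests with $k+1$ components and networks with reticulation number $k$. Nothing further is needed.
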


Moreover, using the $O(\log(\tau)\log(\log(\tau)))$-approximation for weighted \dfvs from~\cite{dfvsApprox}, with~$\tau$ the weight of a minimum feedback vertex set, we also obtain the following.

\begin{corollary}
There exists a polynomial-time $O(\log(k)\log(\log(k)))$-approximation for nonbinary \maaf, with~$k$ the number of components of a maximum acyclic agreement forest minus one.
\end{corollary}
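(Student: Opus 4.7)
The plan is to combine Theorem~\ref{thm:maafapprox} with Theorem~\ref{thm:maf} and the weighted \dfvs approximation of~\cite{dfvsApprox}, instantiating $c=4$ and $d = O(\log(\tau)\log(\log(\tau)))$, where $\tau$ denotes the minimum weight of a feedback vertex set of the \dfvs instance that arises inside the proof of Theorem~\ref{thm:maafapprox}. The resulting approximation ratio for Nonbinary \maaf is $d(c+3) = 7\cdot O(\log(\tau)\log(\log(\tau)))$, so it suffices to show $\tau = O(k)$, where $k = \text{MAAF}(T_1,T_2)$.

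First I would recall that the reduction in Theorem~\ref{thm:maafapprox} takes the approximate (possibly cyclic) agreement forest returned by the 4-approximation for Nonbinary \maf, builds its inheritance graph, and then invokes the \dfvs routine on that graph to break the cycles; each vertex picked by \dfvs is then \emph{split} into a constant number of new components. All weights in the generated \dfvs instance are~1, so $\tau$ is simply the cardinality of a minimum feedback vertex set of that inheritance graph.

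The key step is then the bound $\tau \le O(k)$. For this I would argue as follows. Consider any optimal acyclic agreement forest $\cM$ with $|\cM|=k+1$ components. Because every component of $\cM$ refines (a restriction of) some component of the approximate \maf{} solution~$\cF$, one can use the acyclicity of $IG(T_1,T_2,\cM)$ to exhibit, in $IG(T_1,T_2,\cF)$, a collection of at most $|\cM|-|\cF|$ vertices whose removal leaves an acyclic induced subgraph: intuitively, one keeps for each component of $\cF$ one ``representative'' block of $\cM$ and puts the remaining blocks into the feedback vertex set. Since $|\cF|\ge 1$, this yields $\tau \le k$. (If preferred, one can use the looser but equally sufficient bound $\tau \le |\cF|-1 \le c\cdot\text{MAF}(T_1,T_2) \le c\cdot k$, which also gives $\tau = O(k)$.)

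Given $\tau = O(k)$, the monotonicity of $x\mapsto \log(x)\log\log(x)$ yields $d = O(\log(k)\log\log(k))$, and hence the overall ratio $d(c+3)=7d = O(\log(k)\log\log(k))$. The whole pipeline---the 4-approximation for Nonbinary \maf, the construction of the inheritance graph, the \dfvs approximation of~\cite{dfvsApprox}, and the splitting step---runs in polynomial time, yielding the claimed polynomial-time approximation. The main obstacle is the bound $\tau = O(k)$: this step is where one must look inside the proof of Theorem~\ref{thm:maafapprox} to verify that the constructed \dfvs instance has an optimum tied linearly to \maaf, rather than only to the potentially larger approximate~\maf.
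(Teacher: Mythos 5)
Your overall plan is the right one, and you correctly put your finger on the only nontrivial point: the \dfvs guarantee from~\cite{dfvsApprox} is stated in terms of~$\tau$, the optimal weight of the constructed instance, so one must check that $\tau$ is polynomially (indeed linearly) bounded in $k=\text{MAAF}(T_1,T_2)$ before $O(\log(\tau)\log\log(\tau))$ can be rewritten as $O(\log(k)\log\log(k))$. However, your justification of $\tau=O(k)$ does not match the actual construction and, as written, does not go through. First, the \dfvs instance $D$ is \emph{not} the inheritance graph of the approximate forest $\cA$, and it is \emph{not} unit-weighted: its vertices are the internal vertices and the edges of $\cA$, weighted by $\delta^+_\cA(v)-1$ and $1$ respectively, and removing a vertex of $D$ corresponds to \emph{splitting} a component, not deleting one. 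Second, your main argument assumes that every component of a maximum acyclic agreement forest $\cM$ refines a component of $\cA$; this is false in general --- the partitions induced by $\cM$ and $\cA$ can cross arbitrarily, which is precisely why Section~3 needs the iterative splitting procedure and Lemmas~\ref{lem:edgedisjoint}--\ref{lem:touching_afs}. Third, the fallback bound $\tau\le|\cF|-1$ is also unsupported: a single component of $\cA$ may require many splits to destroy all cycles, and the paper's only control on the number of splits is via $|\cM|$, not via $|\cA|$.

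The correct (and short) argument is already implicit in the paper's chain of inequalities: by the lemma on proper feedback vertex sets, $OptSplit(\cA)=|\cA|+w(F^*)$ where $F^*$ is an optimal weighted feedback vertex set of $D$, and Theorem~\ref{thm:3maaf} gives $OptSplit(\cA)\le|\cA|+3|\cM|$. Hence $\tau=w(F^*)\le 3|\cM|=3(k+1)$, so $\log(\tau)\log(\log(\tau))=O(\log(k)\log(\log(k)))$, and instantiating Theorem~\ref{thm:maafapprox} with $c=4$ from Theorem~\ref{thm:maf} and $d=O(\log(k)\log(\log(k)))$ yields the corollary. I recommend you replace your two proposed bounds on $\tau$ with this one-line appeal to Theorem~\ref{thm:3maaf}, and correct the description of the \dfvs instance accordingly.
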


\section{Approximating nonbinary MAAF}

In this section we prove Theorem~\ref{thm:maafapprox}. We will show that, for two nonbinary trees, \maaf can be approximated by combining algorithms for \maf and \dfvs.

An agreement forest~$\cA$ is said to be \emph{maximal} if there is no agreement forest that can be obtained from~$\cA$ by merging components. It is clear that, given any agreement forest, a maximal agreement forest with at most as many components can be obtained in polynomial time.

Let~$T_1$ and~$T_2$ be two nonbinary trees. Consider a maximal agreement forest~$\cA$ and a maximum acyclic agreement forest~$\cM$ for these two trees. We will first prove that there exists an $\cA$-splitting of size at most $|\cA|+3|\cM|$. After that, we will show how the problem of finding an optimal $\cA$-splitting can be reduced to \dfvs.

The idea of the first part of the proof is to split components of~$\cA$ according to~$\cM$. We show that to make~$\cA$ acyclic we will increase the number of components of~$\cA$ by at most three times the size of~$\cM$.

We start with some definitions. In the first part of the proof, we see an agreement forest~$\cA$ for~$T_1$ and~$T_2$ as a partition of the leaf set~$X$ for which holds that:
\begin{enumerate}
\item $T_1|A$ and~$T_2|A$ have a common refinement, for all~$A\in\cA$; and
\item the subtrees $\{T_i(A)\mid A\in\cA\}$ are edge-disjoint, for~$i\in\{1,2\}$.
\end{enumerate}

Using this definition of agreement forests, an $\cA$-splitting is an acyclic agreement forest that can be obtained by splitting components of~$\cA$. The following observation is easily verifiable.

\begin{observation}\label{obs:acylic}
If~$\cM$ is an acyclic agreement forest and~$\cM'$ is an agreement forest that can be obtained from~$\cM$ by splitting components, then~$\cM'$ is an acyclic agreement forest.
\end{observation}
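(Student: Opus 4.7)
\emph{Proof proposal.} I would prove the contrapositive: assuming $\cM'$ contains a directed cycle in $IG(T_1,T_2,\cM')$, I would derive a directed cycle in $IG(T_1,T_2,\cM)$, contradicting the acyclicity hypothesis. The natural tool is the map $\pi$ sending each $M' \in \cM'$ to the unique parent component $\pi(M') \in \cM$, characterised by $L(M') \subseteq L(\pi(M'))$.

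The central step is a lifting lemma: every edge $(M'_a, M'_b) \in IG(\cM')$ with $\pi(M'_a) \neq \pi(M'_b)$ projects to an edge $(\pi(M'_a), \pi(M'_b)) \in IG(\cM)$. To prove it I would fix a witnessing tree $T_j$ and write $r'_a, r'_b, r_a, r_b$ for the roots of $T_j(L(M'_a)), T_j(L(M'_b)), T_j(L(\pi(M'_a))), T_j(L(\pi(M'_b)))$. The witnessing path runs from $r'_a$ down to $r'_b$ and begins with an edge of $T_j(L(M'_a))$. Since $r'_a$ and $r_b$ are both ancestors of $r'_b$, they are comparable in $T_j$; the case that $r_b$ is a strict ancestor of $r'_a$ can be ruled out because then the directed path from $r_b$ down to $r'_b$ would have to use an edge that simultaneously belongs to $T_j(L(\pi(M'_a)))$ and $T_j(L(\pi(M'_b)))$, contradicting the edge-disjointness enforced by $\cM$ being an agreement forest. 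Hence $r_b$ lies on the witnessing path below $r'_a$; prepending the path from $r_a$ down to $r'_a$ inside $T_j(L(\pi(M'_a)))$ and then truncating at $r_b$ yields the required witness in $IG(\cM)$. With the lemma in hand, any cycle $M'_1 \to \cdots \to M'_r \to M'_1$ in $IG(\cM')$ containing at least one edge with $\pi(M'_i) \neq \pi(M'_{i+1})$ projects, after collapsing consecutive repetitions of $\pi$-values, to a nontrivial closed walk in $IG(\cM)$, hence to a directed cycle and the required contradiction.

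The only remaining case, which I expect to be the main obstacle, is when all $\pi(M'_i)$ coincide with a single $M \in \cM$, so that the supposed cycle lies entirely among pieces of $M$. Here I would invoke the fact that $T_1|L(M)$ and $T_2|L(M)$ admit a common refinement $R$ (because $M$ is a component of $\cM$). An edge $(M'_a, M'_b)$ witnessed in $T_j$ forces the LCA of $L(M'_a)$ in $T_j$ to be a strict ancestor of the LCA of $L(M'_b)$ in $T_j$ (because the witnessing path contains at least one edge), and this strict ancestry is inherited by $T_j|L(M)$ and, since $T_j|L(M)$ is obtained from $R$ by edge contractions that may merge vertices but can never reverse the order of distinct ones, also by $R$. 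The hypothetical cycle would therefore project to a directed cycle in the strict-ancestor order of the tree $R$, which is impossible, completing the proof.
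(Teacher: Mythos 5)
The paper offers no proof of this observation (it is dismissed as ``easily verifiable''), so there is nothing to compare against; judged on its own terms, your proposal is mostly sound but contains one genuine gap. The lifting lemma and the projection of any cycle containing a $\pi$-changing edge onto a directed cycle of $IG(T_1,T_2,\cM)$ are correct (you should also dispose of the boundary case $r_b=r'_a$, but the same edge-disjointness argument covers it, since the whole witnessing path then lies in $T_j(L(\pi(M'_b)))$ while its first edge lies in $T_j(L(M'_a))\subseteq T_j(L(\pi(M'_a)))$). The gap is in the single-component case: the step ``strict ancestry of the two LCAs in $T_j|L(M)$ is inherited by $R$'' is false. Edge contractions cannot reverse the order of comparable vertices, but they \emph{can create} comparability between incomparable ones, and you need the converse direction (comparability of images implies comparability of preimages). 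Concretely, let $R$ have root $z$ with children $u,w$, where $u$ has leaf-children $a_1,a_2$ and $w$ has leaf-children $b_1,b_2$; contracting $(z,u)$ gives a tree in which $\mathrm{LCA}(a_1,a_2)$ is a strict ancestor of $\mathrm{LCA}(b_1,b_2)$, yet $u$ and $w$ are incomparable in $R$. Worse, contracting $(z,w)$ instead reverses the strict LCA-ancestry, so the weakened relation you extract from each edge (``the witnessing path contains at least one edge'') genuinely admits two-cycles even in the presence of a common refinement; your argument would ``refute'' a configuration that actually occurs, which shows the inference is unsound rather than merely under-explained.

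The fix is to use the full strength of the definition of an inheritance edge, which you discard at exactly this point. A witnessed edge $(M'_a,M'_b)$ in $T_j$ gives an edge $(r'_a,x)$ of $T_j(L(M'_a))$ with $r'_b$ a descendant of $x$. Letting $C$ be the set of leaves of $L(M)$ below $x$, the set $C$ is a cluster of $T_j|L(M)$ and hence of every refinement $R$ of it; moreover $L(M'_b)\subseteq C$ while $L(M'_a)$ meets both $C$ and its complement. In $R$ this forces $\mathrm{LCA}_R(L(M'_a))$ to be a strict ancestor of the vertex of $R$ whose cluster is $C$, which in turn is an ancestor of $\mathrm{LCA}_R(L(M'_b))$. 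With this corrected derivation the chain around your hypothetical cycle does yield a vertex of $R$ that is a strict ancestor of itself, and the contradiction goes through.
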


For a component~$A$ of an agreement forest for two trees~$T_1$ and~$T_2$, we write $r_i(A)$ to denote the root of $T_i(A)$, for~$i\in\{1,2\}$. For two components~$M$ and~$M'$ of~$\cM$, we say that~$M$ is \emph{lower} than~$M'$ if in the inheritance graph of~$\cM$ there is a directed path (and hence an edge) from~$M'$ to~$M$. Since the inheritance graph of~$\cM$ is acyclic,~$\cM$ contains some lowest element. Moreover, for any subset of components of~$\cM$, there exists a component that is lowest over that subset. For a component~$A$ of~$\cA$ and a component~$M$ of~$\cM$, we say that~$M$ {\em properly intersects}~$A$ (and that~$A$ is~\emph{properly intersected by}~$M$) if $M\cap A \neq \emptyset$ and $A\setminus M\neq \emptyset$.

We are now ready to describe the procedure for splitting~$\cA$. Initially, all components of~$\cA$ and~$\cM$ are unmarked. We iteratively choose a component~$M^*$ of~$\cM$ that is lowest over all unmarked components of~$\cM$. For each component~$A$ of~$\cA$ that is properly intersected by~$M^*$, we split~$A$ into two components $A\cap M^*$ and $A\setminus M^*$ and we mark $A\cap M^*$. Then we mark~$M^*$ and any unmarked components~$A'$ of~$\cA$ with $A'\subseteq M^*$ and proceed to the next iteration. We continue this procedure until all components of~$\cM$ are marked.

It is clear that, if~$\cA^*$ is the agreement forest obtained at the end of some iteration, then no marked component of~$\cM$ properly intersects any component of~$\cA^*$. Moreover, no marked component of~$\cA^*$ is properly intersected by any component of~$\cM$.

\begin{lemma}
\label{lem:edgedisjoint}
Let~$\cA$ be an agreement forest for~$T_1$ and~$T_2$, let~$M^*$ be a component of~$\cM$ that is lowest over all unmarked components of~$\cM$ and let~$A$ be a component of~$\cA$ that is properly intersected by~$M^*$. Then, $T_i(A \cap M^*)$ and $T_i(A\setminus M^*)$ are edge-disjoint subtrees of~$T_i$, for $i\in\{1,2\}$.
\end{lemma}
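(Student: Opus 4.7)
My plan is to argue by contradiction: suppose some edge $e=(u,v)$ of $T_i$ belongs to both $T_i(A\cap M^*)$ and $T_i(A\setminus M^*)$. Then $v$ is a vertex of both subtrees and hence has descendants in both $A\cap M^*$ and $A\setminus M^*$. I would split into two cases depending on whether $v$ lies strictly below $r_i(M^*)$ in $T_i$ or not.

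In the first case $v$ is not strictly below $r_i(M^*)$, i.e.\ $v$ equals $r_i(M^*)$ or is a strict ancestor of it. Since every leaf of $A\cap M^*$ lies in $M^*$, the vertex $r_i(A\cap M^*)$ lies in the subtree of $T_i$ rooted at $r_i(M^*)$, so the parent $u$ of $v$ is a strict ancestor of $r_i(A\cap M^*)$. But $T_i(A\cap M^*)$ is contained in the subtree of $T_i$ rooted at $r_i(A\cap M^*)$, so $u\notin T_i(A\cap M^*)$, contradicting $e\in T_i(A\cap M^*)$.

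In the second case $v$ is a strict descendant of $r_i(M^*)$, and I would let $c$ be the child of $r_i(M^*)$ whose subtree contains $v$. Because $v$ has an $M^*$-descendant, so does $c$, and therefore the edge $r_i(M^*)\to c$ lies in $T_i(M^*)$. Pick a leaf $\ell_2\in A\setminus M^*$ descending from $v$ and let $M'\in\cM$, with $M'\neq M^*$, be the component of $\cM$ containing $\ell_2$. If $r_i(M')$ is an ancestor (possibly equal) of $r_i(M^*)$, then the path in $T_i$ from $r_i(M')$ down to $\ell_2$ traverses the edge $r_i(M^*)\to c$, placing this edge in $T_i(M')$ as well as in $T_i(M^*)$ and violating the edge-disjointness required of the agreement forest $\cM$ in $T_i$. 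Otherwise $r_i(M')$ is a strict descendant of $r_i(M^*)$; as an ancestor of $\ell_2$ it must lie in the subtree rooted at $c$, so the path in $T_i$ from $r_i(M^*)$ to $r_i(M')$ begins with the $T_i(M^*)$-edge $r_i(M^*)\to c$, and hence the inheritance graph of $\cM$ contains the arc $M^*\to M'$.

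The conceptual crux is then to invoke the invariants of the splitting procedure, and this is where I expect the main subtlety to lie. Because $M^*$ was chosen as a lowest unmarked component of $\cM$, the arc $M^*\to M'$ forces $M'$ to already be marked. The invariant noted just above the lemma says that no marked component of $\cM$ properly intersects any component of $\cA$, so either $A\cap M'=\emptyset$, contradicting $\ell_2\in A\cap M'$, or $A\subseteq M'$; in the latter case $A\cap M^*\subseteq M'\cap M^*=\emptyset$ contradicts the hypothesis that $M^*$ properly intersects $A$. All cases are contradictory, and the argument is symmetric in $i\in\{1,2\}$, completing the lemma.
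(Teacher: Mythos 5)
Your proof is correct and follows essentially the same route as the paper's: locate a leaf $\ell_2\in A\setminus M^*$ below the shared edge, observe that its component $M'\in\cM$ must be lower than $M^*$ in the inheritance graph (hence marked, hence unable to properly intersect $A$), and derive a contradiction. Your extra case analysis (the edge lying above $r_i(M^*)$, and $r_i(M')$ being an ancestor of $r_i(M^*)$) just makes explicit steps the paper compresses into ``$r_1(M')$ has to be below $e$'' and ``$e\in T_1(M^*)$''.
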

\begin{proof}
Suppose to the contrary that in at least one tree, say~$T_1$, there exists an edge~$e$ such that $e\in T_1(A \cap M^*)$ and $e\in T_1(A\setminus M^*)$. Consider the set of leaves~$S$ that can be reached from~$e$. Clearly, some leaf of $A\setminus M^*$ has to be in~$S$. Let $x\in S \cap (A\setminus M^*)$. Clearly,~$x$ must be in some component of~$\cM$ other than~$M^*$. Call that component~$M'$. Since components of~$\cM$ are edge disjoint, $r_1(M')$ has to be below~$e$. Because $e\in T_1(A \cap M^*)$, it follows that $e\in T_1(M^*)$ and hence that~$M'$ is lower than~$M^*$. Since $M^*$ is lowest over all unmarked components of~$\cM$, it follows that~$M'$ is marked and hence that~$M'$ does not properly intersect any component of~$\cA$. This is a contradiction because~$M'$ properly intersects~$A$.
\end{proof}

Lemma \ref{lem:edgedisjoint} shows that when we split a component, the resulting two subtrees are edge-disjoint. This property holds for all components properly intersected by $M^{*}$. Observe that when we split a component, the two newly-created subtrees cannot possibly share edges with other components, because of the assumption that at the start of the iteration all the subtrees were edge-disjoint. Hence, at the end of the iteration, all of the subtrees are mutually edge-disjoint. To show that we still have an agreement forest, it is necessary to show that the components still obey the refinement criterion. This follows from the following (unsurprising) observation.

\begin{observation}
\label{obs:stillrefine}
Let $T^{*}_1$ and $T^{*}_2$ be two trees on taxon set $X^{*}$ such that $T^{*}_1$ and $T^{*}_2$ have a common refinement. Then, for any $X^{\dagger} \subseteq X^{*}$,  $T^{*}_1 | X^{\dagger}$ and $T^{*}_2 | X^{\dagger}$ have a common refinement.
\end{observation}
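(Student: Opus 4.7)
The plan is to reduce the observation to the standard cluster-based characterisation of refinement for rooted phylogenetic trees. Recall that for a rooted tree $T$ on leaf set $X$, every internal vertex $v$ determines a \emph{cluster}, namely the set of leaves reachable from $v$; and a tree $T'$ on $X$ is a refinement of $T$ precisely when every cluster of $T$ is a cluster of $T'$ (since the contractions that turn $T'$ into $T$ correspond exactly to merging clusters along the edges being contracted, and the clusters of $T$ are preserved). This characterisation is the natural bridge between the ``obtained by contracting edges'' formulation used in the excerpt and the set-theoretic manipulation that the restriction operator $|X^{\dagger}$ performs.

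With this in hand, I would prove the following auxiliary claim: if $T$ is a refinement of $T^{*}$ on the same leaf set $X^{*}$, and $X^{\dagger} \subseteq X^{*}$, then $T|X^{\dagger}$ is a refinement of $T^{*}|X^{\dagger}$. The key step is to verify that the clusters of $T|X^{\dagger}$ are exactly the sets of the form $C \cap X^{\dagger}$, where $C$ is a cluster of $T$ (discarding empties and duplicates coming from suppression of in- and outdegree-$1$ vertices); this follows directly from the definition of $T(X^{\dagger})$ as the minimal subtree spanning $X^{\dagger}$ followed by suppression. Consequently, any cluster of $T^{*}|X^{\dagger}$ has the form $C^{*} \cap X^{\dagger}$ for some cluster $C^{*}$ of $T^{*}$; since $C^{*}$ is also a cluster of $T$ (by the refinement hypothesis), $C^{*} \cap X^{\dagger}$ is a cluster of $T|X^{\dagger}$, which is exactly what is needed.

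To finish, let $T$ be a common refinement of $T^{*}_1$ and $T^{*}_2$ on $X^{*}$, whose existence is guaranteed by hypothesis. Applying the auxiliary claim to $T^{*}_1$ and to $T^{*}_2$ in turn shows that $T|X^{\dagger}$ is a refinement of both $T^{*}_1|X^{\dagger}$ and $T^{*}_2|X^{\dagger}$, so $T|X^{\dagger}$ is itself the desired common refinement. The main (and only genuine) obstacle is the bookkeeping around the suppression of in- and outdegree-$1$ vertices when passing from $T(X^{\dagger})$ to $T|X^{\dagger}$: one has to check that suppression does not destroy any cluster that is needed to witness a cluster of $T^{*}_i|X^{\dagger}$. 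But this is automatic, because suppression only removes clusters that already coincide with a child cluster, and such duplicated clusters play no role in deciding whether $T|X^{\dagger}$ refines $T^{*}_i|X^{\dagger}$.
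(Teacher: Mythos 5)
Your proof is correct. The paper offers no proof at all for this statement --- it is presented as an ``unsurprising'' observation --- so there is nothing to compare against; your argument via the standard cluster characterisation of refinement (restriction maps clusters to their intersections with $X^{\dagger}$, and refinement is containment of cluster sets) is the natural way to make the observation rigorous, and the one genuine point of care, that suppressing in- and outdegree-$1$ vertices only discards duplicated clusters, is handled correctly.
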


We have now shown that the result of each iteration is an agreement forest for~$T_1$ and~$T_2$ such that no marked component of~$\cM$ properly intersects any component of this agreement forest. Let~$\cA'$ be the agreement forest obtained at the end of the last iteration. Since at the end of the procedure all components of~$\cM$ are marked, no component of~$\cM$ properly intersects any component of~$\cA'$. It follows that~$\cA'$ can be obtained from~$\cM$ by splitting components. Since~$\cM$ is acyclic, it follows by Observation~\ref{obs:acylic} that~$\cA'$ is acyclic. Hence, $\cA'$ is an $\cA$-splitting. It remains to bound the size of this $\cA$-splitting. To do so, we will need the following observation and lemma.

\begin{observation}\label{obs:merge}
Let~$A_1$ and~$A_2$ be components of some agreement forest for~$T_1$ and~$T_2$. If~$A_1$ and~$A_2$ have the same root in both~$T_1$ and~$T_2$, then the result of merging~$A_1$ and~$A_2$ into a single component $A_1\cup A_2$ is still an agreement forest of~$T_1$ and~$T_2$.
\end{observation}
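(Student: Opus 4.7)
The plan is to verify the two defining conditions of an agreement forest for the merged partition $\cA' = (\cA \setminus \{A_1,A_2\}) \cup \{A_1 \cup A_2\}$: namely, edge-disjointness of the induced subtrees $\{T_i(A) \mid A \in \cA'\}$ in each~$T_i$, and existence of a common refinement of $T_1|(A_1\cup A_2)$ and $T_2|(A_1\cup A_2)$. All components of~$\cA$ other than $A_1$ and $A_2$ already satisfy the relevant conditions and are unaffected by the merge, so attention can be restricted to the new block $A_1\cup A_2$.

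For edge-disjointness, fix $i\in\{1,2\}$ and write $u_i$ for the common value $r_i(A_1)=r_i(A_2)$. Since $T_i(A_1)$ and $T_i(A_2)$ are both connected subtrees of~$T_i$ rooted at~$u_i$ and containing exactly the leaves of $A_1$, respectively $A_2$, their union is a connected subtree of~$T_i$ rooted at~$u_i$ that contains precisely the leaves of $A_1\cup A_2$; hence it coincides with $T_i(A_1\cup A_2)$. Because $T_i(A_1)$ and $T_i(A_2)$ were each edge-disjoint from $T_i(A)$ for every other $A\in\cA$, so is their union, which settles this half.

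For the common refinement, let $R_1$ be a common refinement of $T_1|A_1$ and $T_2|A_1$, and let $R_2$ be a common refinement of $T_1|A_2$ and $T_2|A_2$; these exist because $A_1$ and $A_2$ are components of the agreement forest~$\cA$. I would build a candidate refinement~$R$ by introducing a fresh root with two children, being the roots of $R_1$ and $R_2$. To see that $R$ refines $T_i|(A_1\cup A_2)$, first observe that no child~$c$ of~$u_i$ in~$T_i$ can have descendants in both $A_1$ and $A_2$, since otherwise the edge $(u_i,c)$ would lie in both $T_i(A_1)$ and $T_i(A_2)$, contradicting their edge-disjointness. Consequently the root of $T_i|(A_1\cup A_2)$ (the suppression image of~$u_i$, which survives because it has indegree~$0$) has its children partitioned into those leading to $A_1$-leaves and those leading to $A_2$-leaves. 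Starting from $R$, one recovers $T_i|(A_1\cup A_2)$ by contracting the two edges out of the new root and then applying whatever edge contractions turn $R_j$ into $T_i|A_j$ for $j\in\{1,2\}$.

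The only genuinely delicate point is the root-level identification above: one must check that, after restriction and suppression, $T_i|(A_1\cup A_2)$ really does decompose cleanly into an $A_1$-half and an $A_2$-half meeting at a single vertex. Once this is in hand, the interior refinements are supplied by $R_1$ and $R_2$ and simply get glued at the new root of~$R$, with the two extra root edges contracted away when reading off the restriction. I expect no other obstacles, and the proof amounts to little more than bookkeeping around the shared root~$u_i$.
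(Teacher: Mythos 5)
Your proof is correct. The paper states this as an unproved ``Observation,'' so there is no official argument to compare against, but your verification supplies exactly the details one would want: since $r_i(A_1)=r_i(A_2)=u_i$, the subtrees $T_i(A_1)$ and $T_i(A_2)$ meet only at $u_i$ (no child of $u_i$ can have descendants in both blocks, by edge-disjointness), so $T_i(A_1\cup A_2)$ is their union and $T_i|(A_1\cup A_2)$ is the root-identification of $T_i|A_1$ and $T_i|A_2$; gluing the two common refinements $R_1,R_2$ under a fresh root and contracting its two outgoing edges then yields a common refinement of $T_1|(A_1\cup A_2)$ and $T_2|(A_1\cup A_2)$. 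The only unexamined corner is the degenerate case where some $A_j$ is a singleton, but that cannot occur here (a shared root forces both blocks to have at least two leaves), so nothing is missing.
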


\begin{lemma}
\label{lem:maximal} If~$\cA$ is the agreement forest at the beginning of some iteration, then there are no four unmarked components of~$\cA$ that have a common vertex in both trees.
\end{lemma}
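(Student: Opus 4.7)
My approach is to argue by contradiction, using four hypothetical unmarked components to violate the maximality of the original $\cA$. Suppose for contradiction that at the start of some iteration there are four unmarked components $A_1', A_2', A_3', A_4'$ that share a common vertex $v_1$ of $T_1$ and a common vertex $v_2$ of $T_2$ (meaning $v_1 \in T_1(A_i')$ and $v_2 \in T_2(A_i')$ for every $i$).

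First I would lift this configuration to the original $\cA$. A short induction on the iteration count shows that every unmarked component currently present is of the form $A \setminus (M^*_1 \cup \cdots \cup M^*_{t-1})$ for some original component $A$ of $\cA$: whenever an iteration processes some $M^*$, each intersected component $A$ is split into one marked piece $A \cap M^*$ and one unmarked piece $A \setminus M^*$, so at any time at most one unmarked descendant of a given original component survives. Hence $A_1', A_2', A_3', A_4'$ descend from four distinct original components $A_1, A_2, A_3, A_4 \in \cA$; moreover, since $A_i' \subseteq A_i$ implies $T_j(A_i') \subseteq T_j(A_i)$ for $j \in \{1,2\}$, we also have $v_1 \in T_1(A_i)$ and $v_2 \in T_2(A_i)$ for each $i$.

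Next I would apply a short pigeonhole argument based on edge-disjointness. For $j \in \{1, 2\}$, if $v_j$ is a non-root vertex of $T_j(A_i)$ then the unique incoming edge of $v_j$ in $T_j$ must lie in $T_j(A_i)$; since the subtrees $\{T_j(A) : A \in \cA\}$ are pairwise edge-disjoint, at most one of $A_1, \ldots, A_4$ can have $v_j$ as a non-root vertex (and if $v_j$ is the root of $T_j$ there is nothing to say). Consequently at least three of the four components satisfy $v_j = r_j(A_i)$ for each $j$, so by inclusion-exclusion at least $3 + 3 - 4 = 2$ of them, say $A_1$ and $A_2$, satisfy $r_1(A_1) = r_1(A_2) = v_1$ and $r_2(A_1) = r_2(A_2) = v_2$. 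Observation~\ref{obs:merge} then lets us replace $A_1$ and $A_2$ by $A_1 \cup A_2$ in $\cA$ to obtain an agreement forest of $T_1$ and $T_2$ with one fewer component, contradicting the maximality of $\cA$.

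The subtlest step is the lifting: one must verify that distinct unmarked components at the current iteration descend from distinct original components of $\cA$ (so that we genuinely produce four members of $\cA$ sharing both vertices), and that the common-vertex property is inherited under the inclusion $T_j(A_i') \subseteq T_j(A_i)$. Once this bookkeeping is in place, the pigeonhole computation and the appeal to Observation~\ref{obs:merge} are entirely routine.
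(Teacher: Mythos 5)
Your proof is correct and follows essentially the same route as the paper: the paper establishes the claim for the original maximal $\cA$ via the same pigeonhole on the incoming edges of the common vertices followed by a merge via Observation~\ref{obs:merge}, and then propagates it through the iterations by noting that each split yields one marked and one unmarked piece. Your ``lifting'' of the four unmarked components back to four distinct original components is just the backward-phrased version of the paper's forward induction, and both rest on the same bookkeeping fact.
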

\begin{proof}
First let~$\cA$ be the agreement forest at the beginning of the first iteration. Suppose that $A_1,A_2,A_3,A_4$ are unmarked components of~$\cA$ and that~$v_i$ is common to $T_i(A_1),\ldots ,T_i(A_4)$ for~$i\in\{1,2\}$. For each~$i\in\{1,2\}$, there is at most one~$j\in\{1,2,3,4\}$ for which~$T_i(A_j)$ contains the edge entering~$v_i$. Hence, there are at least two components, say~$A_1$ and~$A_2$, that do not use this edge in either tree. It follows that~$r_i(A_1)=r_i(A_2)=v_i$ for~$i\in\{1,2\}$. However, then~$A_1$ and~$A_2$ can be merged into a single component by Observation~\ref{obs:merge}, which is a contradiction because~$\cA$ is maximal.

We have shown that the lemma is true at the beginning of the first iteration. Now assume that it is true at the beginning of some iteration. Each component that is split during the iteration is split into one marked and one unmarked component. Hence, for each vertex, the number of unmarked components using that vertex does not increase. It follows that the lemma is still true at the end of the iteration. This completes the proof.
\end{proof}

\begin{lemma}
\label{lem:touching_afs}
Let~$\cA$ be the agreement forest at the beginning of some iteration. If~$M^*$ is a component of~$\cM$ that is lowest over all unmarked components of~$\cM$, then~$M^*$ properly intersects at most three components of~$\cA$.
\end{lemma}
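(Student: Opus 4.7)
The plan is to argue by contradiction via Lemma~\ref{lem:maximal}. Suppose $M^*$ properly intersects four distinct components $A_1,\ldots,A_4$ of~$\cA$. These four components must be unmarked: every marked component $A$ of $\cA$ is contained in some $M'\in\cM$ with $M'\ne M^*$ (either $A$ arose as an $A\cap M'$-piece during the processing of $M'$, or $A\subseteq M'$ held when $M'$ was processed), whence $A\cap M^*\subseteq M'\cap M^*=\emptyset$, contradicting proper intersection.

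To contradict Lemma~\ref{lem:maximal} it then suffices to exhibit, for each $i\in\{1,2\}$, a single vertex of $T_i$ that lies in $T_i(A_j)$ for all four $j$; my candidate is $r_i(M^*)$. Fix $i$ and $j$ and pick $y\in A_j\cap M^*$ and $z\in A_j\setminus M^*$. Since $y\in L(M^*)$, $y$ is a descendant of $r_i(M^*)$ in $T_i$. If some valid choice of $z$ is not a descendant of $r_i(M^*)$, the $y$-to-$z$ path in $T_i(A_j)$ must pass through $r_i(M^*)$ and we are done.

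The delicate case is when every $z\in A_j\setminus M^*$ is a descendant of $r_i(M^*)$. Here the plan is to show that such a $z$ necessarily lies below some child of $r_i(M^*)$ whose subtree contains no leaf of $L(M^*)$, whereas $y$ lies below a child whose subtree does contain a leaf of $L(M^*)$; hence $y$ and $z$ lie below different children of $r_i(M^*)$, forcing $\mathrm{LCA}_{T_i}(y,z)=r_i(M^*)\in T_i(A_j)$. For the subsidiary claim, assume for contradiction that $z$ lies below some child $c$ of $r_i(M^*)$ whose subtree does contain a leaf of $L(M^*)$, so that $(r_i(M^*),c)\in T_i(M^*)$. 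Let $M_z\in\cM$ be the component containing $z$ (so $M_z\ne M^*$). A short case analysis on the position of $r_i(M_z)$ relative to $r_i(M^*)$ and $c$, combined with the edge-disjointness of the embeddings $\{T_i(M):M\in\cM\}$, either rules out the configuration directly or forces the path from $r_i(M^*)$ to $r_i(M_z)$ in $T_i$ to begin with the edge $(r_i(M^*),c)\in T_i(M^*)$; by the definition of $IG(T_1,T_2,\cM)$ this makes $M_z$ lower than $M^*$. Since $M^*$ is the lowest unmarked component, $M_z$ must already be marked, and then the invariant that no marked component of $\cM$ properly intersects any component of $\cA$, together with $A_j$ being unmarked, forces $M_z\cap A_j=\emptyset$, contradicting $z\in M_z\cap A_j$.

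I expect this subsidiary claim --- in particular the case analysis on the location of $r_i(M_z)$ using edge-disjointness within $\cM$ --- to be the main technical obstacle, and it is the only place where the ``lowest unmarked'' hypothesis on $M^*$ enters essentially. Once the claim is in hand, applying Lemma~\ref{lem:maximal} with common vertex $r_1(M^*)$ in $T_1$ and $r_2(M^*)$ in $T_2$ yields the desired contradiction.
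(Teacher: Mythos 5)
Your proof is correct and follows essentially the same route as the paper: both arguments show that $r_i(M^*)$ is a vertex of $T_i(A_j)$ for every properly intersected component $A_j$ and each $i\in\{1,2\}$, and then invoke Lemma~\ref{lem:maximal}, using the same three ingredients (edge-disjointness of the embeddings of~$\cM$, the ``lowest unmarked'' hypothesis, and the invariant that marked components of~$\cM$ do not properly intersect components of~$\cA$) in the same way. The only difference is cosmetic --- the paper establishes a directed path from $r_i(A_j)$ down to $r_i(M^*)$ inside $T_i(A_j)$, whereas you locate $r_i(M^*)$ on the path between a leaf $y\in A_j\cap M^*$ and a leaf $z\in A_j\setminus M^*$ --- and the two-case analysis you flag as the main obstacle (whether $r_i(M_z)$ is a weak descendant of~$c$ or a weak ancestor of $r_i(M^*)$) does close exactly as you anticipate.
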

\begin{proof}
Let~$A$ be a component of~$\cA$ that is properly intersected by~$M^*$. We claim that there is a directed path from~$r_i(A)$ to~$r_i(M^*)$ for~$i\in\{1,2\}$ (possibly, $r_i(A)=r_i(M^*)$). To see this, first note that, since $A\cap M^*\neq\emptyset$, there must be either a directed path from~$r_i(A)$ to~$r_i(M^*)$, or from~$r_i(M^*)$ to~$r_i(A)$, for~$i\in\{1,2\}$. Suppose that this path goes from~$r_i(M^*)$ to~$r_i(A)$ and contains at least one edge. Since $A\cap M^*\neq\emptyset$, this path has to contain at least one edge of~$T_i(M^*)$. Now observe that, since~$M^*$ properly intersects~$A$, there exists some $a\in A\setminus M^*$, which is in some component of~$\cM$, say in~$M'$. Note that~$M'$ is unmarked since it properly intersects~$A$. However, then we obtain a contradiction because~$M'$ is lower than~$M^*$, while~$M^*$ is lowest over all unmarked components of~$\cM$. Hence, there is a directed path from~$r_i(A)$ to~$r_i(M^*)$ for~$i\in\{1,2\}$. This path is contained in~$T_i(A)$ because $A\cap M^*\neq\emptyset$.

Now assume that the lemma is not true, i.e. that there exist four components $A_1,A_2,A_3,A_4$ of~$\cA$ that are properly intersected by~$M^*$. We have seen that there is a directed path from $r_i(A_j)$ to~$r_i(M^*)$, which is contained in~$T_i(A_j)$, for~$i\in\{1,2\}$ and~$j\in\{1,2,3,4\}$. Hence, $r_i(M^*)$ is common to all four components~$A_1,A_2,A_3,A_4$. Moreover, $A_1,A_2,A_3,A_4$ are all unmarked since they are properly intersected by~$M^*$. This is a contradiction to Lemma~\ref{lem:maximal}.
\end{proof}

Lemma~\ref{lem:touching_afs} shows that each iteration splits at most three components. Each split adds one component. Thus, for every component~$M$ of~$\cM$, the number of components of the $\cA$-splitting is increased by at most three, which concludes the proof of the following theorem.

\begin{theorem}
\label{thm:3maaf} Let~$T_1$ and~$T_2$ be two (nonbinary) trees. If~$\cA$ is a maximal agreement forest and~$\cM$ a maximum acyclic agreement forest of~$T_1$ and~$T_2$, then there exists an $\cA$-splitting of size at most $|\cA|+3|\cM|$.
\end{theorem}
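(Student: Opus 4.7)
The plan is to read off the theorem from the iterative splitting procedure already set up above, together with Lemma~\ref{lem:touching_afs} for the bookkeeping. I would first argue that the procedure terminates and produces an $\cA$-splitting, then count how much the component count grows.

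For termination and correctness: each iteration marks at least one component of $\cM$ (namely $M^*$), so after at most $|\cM|$ iterations all of $\cM$ is marked and the procedure halts with some forest $\cA'$. At each iteration, Lemma~\ref{lem:edgedisjoint} guarantees that every split $A \mapsto (A\cap M^*, A\setminus M^*)$ yields subtrees that are edge-disjoint in both $T_1$ and $T_2$; combined with edge-disjointness from the previous iteration (and the fact that the new subtrees are contained in $T_i(A)$), this preserves condition~2 of being an agreement forest. Observation~\ref{obs:stillrefine} takes care of condition~1, since both $A\cap M^*$ and $A\setminus M^*$ are subsets of $A$ and $T_1|A$, $T_2|A$ already admitted a common refinement. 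Finally, when the procedure halts, no component of $\cM$ properly intersects any component of $\cA'$, so every component of $\cA'$ is a subset of a component of $\cM$; thus $\cA'$ is obtainable from $\cM$ by splitting components, and Observation~\ref{obs:acylic} delivers acyclicity. Hence $\cA'$ really is an $\cA$-splitting.

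For the size bound: in the iteration processing $M^*$, exactly the components of $\cA$ properly intersected by $M^*$ are split, each into two, so the total component count increases by the number of such components. Lemma~\ref{lem:touching_afs} says this number is at most $3$. Summing over the at most $|\cM|$ iterations gives a total increase of at most $3|\cM|$, so $|\cA'| \le |\cA| + 3|\cM|$, as claimed.

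The real obstacle is hidden in Lemma~\ref{lem:touching_afs} and its reliance on Lemma~\ref{lem:maximal}: without the maximality hypothesis on $\cA$, four or more unmarked components of $\cA$ could share the common ``root witness'' $r_i(M^*)$ in both trees and the constant $3$ would blow up. Once one has internalised that maximality prevents this collision, assembling the theorem is the straightforward induction on iterations sketched above.
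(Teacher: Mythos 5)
Your proposal is correct and follows essentially the same route as the paper: the same iterative splitting procedure, with Lemma~\ref{lem:edgedisjoint} and Observation~\ref{obs:stillrefine} preserving the agreement-forest properties, Observation~\ref{obs:acylic} delivering acyclicity once no component of~$\cM$ properly intersects~$\cA'$, and Lemma~\ref{lem:touching_afs} (via the maximality argument of Lemma~\ref{lem:maximal}) bounding the growth by three components per iteration. Your closing remark correctly identifies where the maximality hypothesis is actually used.
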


Now, suppose we have a $c$-approximation~$\cA$ for \maf; i.e.,
\begin{displaymath}
  |\cA|-1 \leq c \cdot \maf(T_1,T_2) \leq c \cdot \maaf(T_1,T_2).
\end{displaymath}
Let $OptSplit(\cA)$ denote the size of an $\cA$-splitting with smallest number of components. The last inequality together with Theorem~\ref{thm:3maaf} imply that
\begin{displaymath}
  OptSplit(\cA)-1 \leq |\cA|-1 + 3 \cdot \maaf(T_1, T_2) \leq (c+3)\cdot \maaf(T_1, T_2).
\end{displaymath}

The remaining part of the proof will be accomplished by reducing the problem of finding an optimal $\cA$-splitting to \dfvs in such a way that a $d$-approximation algorithm for \dfvs gives a $d$-approximation for an optimal $\cA$-splitting. Combining this with the above inequality, the proof of Theorem~\ref{thm:maafapprox} will follow.

\begin{figure}
 \centering
 \includegraphics[scale=.8]{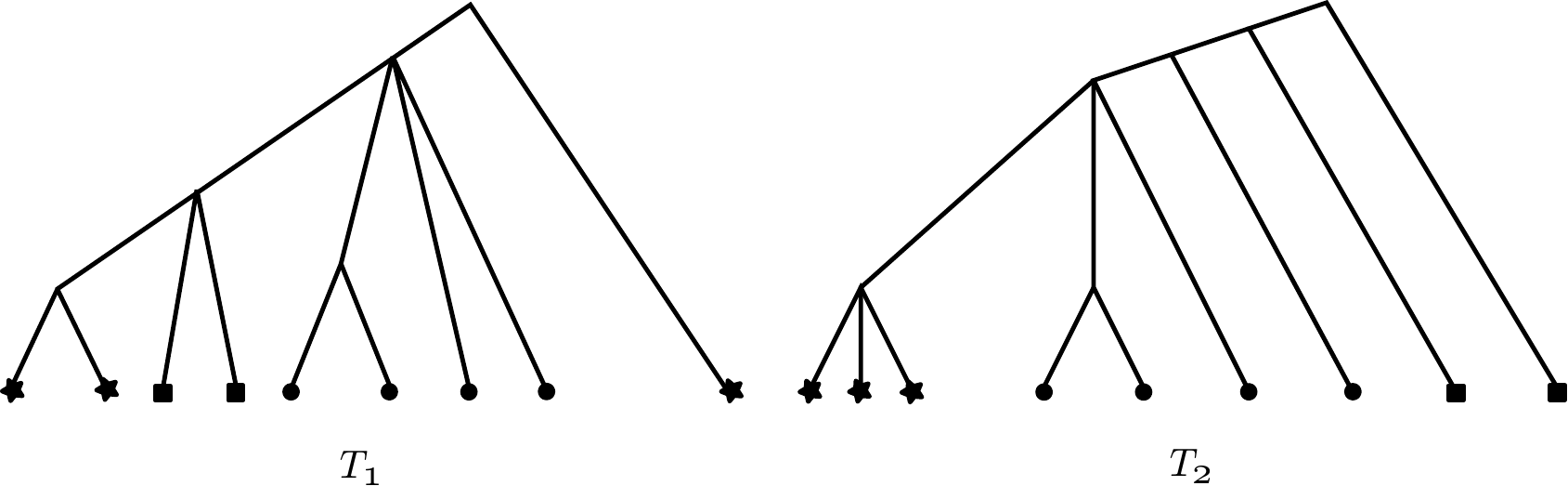}
 \vspace{.5cm}
 \includegraphics[scale=.9]{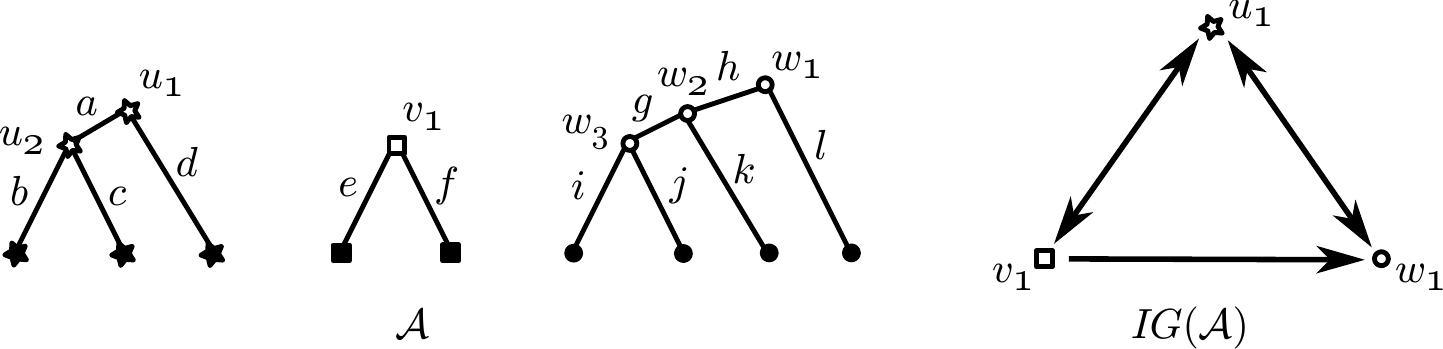}
 \caption{Two trees~$T_1$ and~$T_2$, an agreement forest~$\cA$ for~$T_1$ and~$T_2$ and its inheritance graph $IG(\cA)$. Leaf labels have been omitted.\label{fig:reduc1}}
\end{figure}

\begin{figure}
 \centering
 \includegraphics[scale=.8]{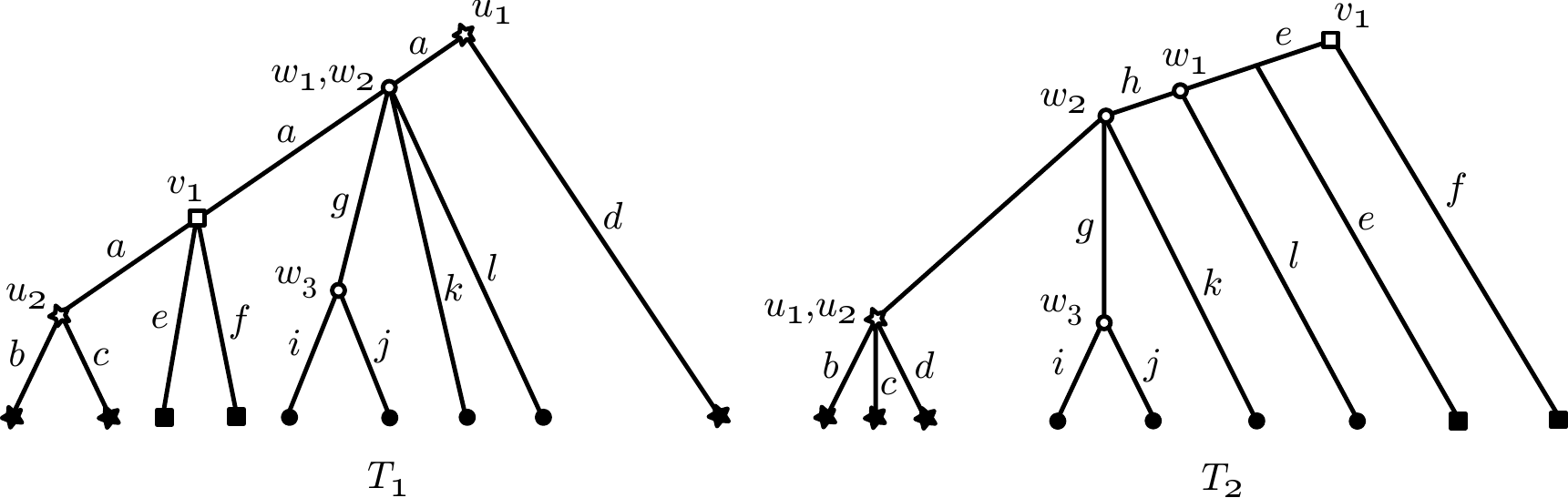}
 \caption{The trees~$T_1$ and~$T_2$ from Figure~\ref{fig:reduc1} labelled with the internal vertices and all edges of~$\cA$.\label{fig:reduc3}}
\end{figure}

\begin{figure}
 \centering
 \includegraphics[scale=.9]{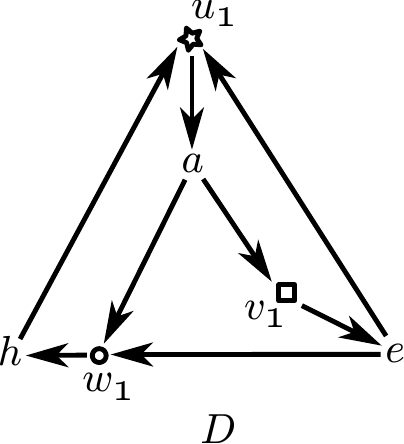}
 \caption{Part of the input graph~$D$ for \dfvs for the trees~$T_1,T_2$ and agreement forest~$\cA$ of Figures~\ref{fig:reduc1} and~\ref{fig:reduc3}. Vertices that do not appear in any directed cycle of~$D$ have been omitted. Minimum feedback vertex sets of~$D$ are $\{u_1\}$ and $\{a\}$.\label{fig:reduc4}}
\end{figure}

From now on, we see an agreement forest as a collection of trees, as specified in the preliminaries section. We will assume that the components of~$\cA$ are never more refined than necessary (i.e. there is no agreement forest of~$T_1$ and~$T_2$ that can be obtained from~$\cA$ by contracting edges).

To prepare for the construction of an input graph to \dfvs, we label the vertices and edges of~$T_1$ and~$T_2$ by the vertices and edges of~$\cA$ that they correspond to. Note that each vertex of~$\cA$ is used exactly once as a label but, due to refinements, some vertices of the trees can have multiple labels. Moreover, some edges of~$\cA$ can be used as a label multiple times (when the edge corresponds to a path in a tree) but each edge is used as a label at least once in at least one tree (by the assumption that~$\cA$ is not more refined than necessary). For example, for the trees and agreement forest in Figure~\ref{fig:reduc1}, the labelling is illustrated in Figure~\ref{fig:reduc3}.

We construct an input graph~$D$ for \dfvs as follows. For every internal vertex of $\cA$, we create a vertex for~$D$. Denote the set of these vertices by~$V_V(D)$. In addition, for every edge of~$\cA$ we create a vertex for~$D$. This set of vertices is denoted by~$V_E(D)$. We write $V(D)= V_V(D) \cup V_E(D)$. We create edges of~$D$ as follows. For every $v \in V_V(D)$ and every $e\in V_E(D)$ create an edge $(v,e)$ if $tail(e)=v$ in~$\cA$ and we create an edge $(e,v)$ if~$\hat{v}$ can be reached from $head(\hat{e})$ in at least one tree, for some edge~$\hat{e}$ labelled by~$e$ and the vertex~$\hat{v}$ labelled by~$v$. See Figure~\ref{fig:reduc4} for an example.

We will show that any feedback vertex set of~$D$ corresponds to an $\cA$-splitting and vice-versa. Moreover, after appropriately weighting the vertices of~$D$, the number of ``splits'' of the $\cA$-splitting (i.e. the size of the $\cA$-splitting minus the size of~$\cA$) will be equal to the weight of the corresponding weighted feedback vertex set.

Let~$F\subset V(D)$. In what follows, we use~$F$ both for sets of vertices in~$D$ and for the set of vertices and edges they represent in~$\cA$. Let $\cA \setminus F$ be the forest obtained from~$\cA$ by removing the vertices and edges of~$F$ and repeatedly removing indegree-0 outdegree-1 vertices and suppressing indegree-1 outdegree-1 vertices.

\begin{lemma}
A subset~$F$ of~$V(D)$ is a feedback vertex set of~$D$ if and only if $\cA \setminus F$ is an $\cA$-splitting.
\end{lemma}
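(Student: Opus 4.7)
My plan is to establish the lemma by setting up a correspondence between directed cycles in $D$ that avoid $F$ and directed cycles in the inheritance graph $IG(T_1,T_2,\cA\setminus F)$.

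First I would verify that, independently of $F$, the forest $\cA\setminus F$ is an agreement forest of $T_1$ and $T_2$. Deleting vertices and edges of $\cA$, followed by the standard cleanup, only splits components of $\cA$ into smaller pieces whose leaf sets refine $\{L(A): A\in\cA\}$. Edge-disjointness of the embeddings in each $T_i$ is therefore inherited from $\cA$, and the refinement property for each new component follows from Observation~\ref{obs:stillrefine} applied to the corresponding pair of restricted trees. Since $\cA\setminus F$ is also, by construction, obtainable from $\cA$ (viewed as a refinement of itself) by removing edges, the lemma reduces to: $F$ is a feedback vertex set of $D$ if and only if $IG(T_1,T_2,\cA\setminus F)$ is acyclic.

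Next I would analyse cycles in $D$. Since every arc of $D$ crosses between $V_V(D)$ and $V_E(D)$, any directed cycle alternates and has the form $v_1\to e_1\to v_2\to e_2\to\cdots\to v_r\to e_r\to v_1$, with $v_i\in V_V(D)$ and $e_i\in V_E(D)$. Suppose such a cycle avoids $F$. The arc $(v_i,e_i)$ forces $v_i=tail(e_i)$ in $\cA$, so $v_i$ and $e_i$ survive in $\cA\setminus F$ and lie in a common component $C_i$ of $\cA\setminus F$. The arc $(e_i,v_{i+1})$ witnesses some tree $T_{j_i}$ in which an edge $\hat e_i$ labelled $e_i$ is followed by a directed path to a vertex $\hat v_{i+1}$ labelled $v_{i+1}$. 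Through the labelling, $\hat e_i$ lies in the embedding of $C_i$ in $T_{j_i}$ and $\hat v_{i+1}$ in the embedding of $C_{i+1}$; edge-disjointness of the two embeddings forces $r_{j_i}(C_{i+1})$ to lie on the downward path from $r_{j_i}(C_i)$ through $\hat e_i$ to $\hat v_{i+1}$, which is precisely the definition of the arc $(C_i,C_{i+1})$ in $IG(T_1,T_2,\cA\setminus F)$. Stringing the $r$ arcs together produces a closed walk; a separate observation (the labelling is monotone with respect to the internal tree order of each single component, so no $D$-cycle can live inside one component) ensures that not all $C_i$ are equal, so a genuine cycle of the inheritance graph can be extracted.

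For the reverse translation, given a cycle $C_1\to C_2\to\cdots\to C_r\to C_1$ in $IG(T_1,T_2,\cA\setminus F)$, each arc is witnessed by a tree $T_{j_i}$ and a downward path from $r_{j_i}(C_i)$ that traverses an edge $\hat e_i$ of the embedding of $C_i$ and reaches $r_{j_i}(C_{i+1})$. Let $e_i$ be the $\cA$-label of $\hat e_i$ and $v_{i+1}$ be the top-most $\cA$-label of $r_{j_i}(C_{i+1})$, and let $v_i=tail(e_i)$. Then $v_i,e_i\in C_i$ and $v_{i+1}\in C_{i+1}$, so all involved vertices lie in $V(D)\setminus F$, and $D$ contains the arcs $v_i\to e_i$ and $e_i\to v_{i+1}$ by construction. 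Chaining these arcs over $i$ yields a closed walk in $D$ of length $2r$ avoiding $F$ and hence contains a directed cycle avoiding $F$, contradicting that $F$ is a feedback vertex set.

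The main technical obstacle will be the bookkeeping around the labelling: a single vertex of $T_{j_i}$ may carry several $\cA$-labels (because components of $\cA$ can be strictly refined inside the trees), and the root $r_{j_i}(C_{i+1})$ can in principle coincide as a vertex of $T_{j_i}$ with an internal vertex of the embedding of $C_i$. When translating, one must pick $v_{i+1}$ to be the top-most $\cA$-vertex of $C_{i+1}$, so that $v_{i+1}$ can simultaneously serve as $tail(e_{i+1})$ for an edge $e_{i+1}$ leaving $r_{j_{i+1}}(C_{i+1})$ in the next step. The standing assumption that $\cA$ is never more refined than necessary, together with the guarantee that each edge of $\cA$ is realised as a label in at least one tree, makes these choices consistent. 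Once this bookkeeping is in place, both implications of the lemma follow by contraposition from the cycle correspondence.
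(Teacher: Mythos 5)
Your overall strategy coincides with the paper's: observe that $\cA\setminus F$ is always an agreement forest, so the lemma reduces to showing that $D\setminus F$ has a directed cycle if and only if $IG(T_1,T_2,\cA\setminus F)$ does, and then translate cycles in both directions through the labelling. Your translation from an inheritance-graph cycle to a $D$-cycle is essentially the paper's argument (roots of components plus a witnessing edge leaving each root). Where you genuinely diverge is the other direction: you translate a $D$-cycle arc by arc, producing an $IG$-arc $(C_i,C_{i+1})$ for every consecutive pair lying in distinct components and using a monotonicity observation to rule out a cycle living inside a single component, whereas the paper takes a \emph{longest} cycle of $D\setminus F$, isolates the vertices on it that are roots of components of $\cA\setminus F$, and converts each root-to-root segment (which may pass through several internal vertices and edges of one component) into a single $IG$-arc. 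Your decomposition is arguably cleaner and avoids the longest-cycle device, but it shifts all the weight onto one claim that you flag as ``bookkeeping'' and do not discharge: that the witnessing tree edge $\hat e_i$ actually lies in $T_{j_i}(L(C_i))$, the embedding of the component of $\cA\setminus F$, and not merely in the embedding of the larger component of $\cA$ that supplied the label. Since $L(C_i)$ is in general a proper subset of the leaf set of its $\cA$-component, the embedding $T_{j_i}(L(C_i))$ can sit strictly lower in the tree than the labelled path for $e_i$, and the required $IG$-path must start at the root of $T_{j_i}(L(C_i))$; this is precisely the difficulty the paper sidesteps by anchoring each segment at a component root, where the edge leaving the root is guaranteed to separate two surviving subtrees and hence to lie in the embedding. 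The claim can be verified (one checks that an edge of $\cA$ that survives with both endpoints in $C_i$ separates $L(C_i)$ nontrivially, which forces every tree edge carrying its label to lie in $T_{j_i}(L(C_i))$), but as written this is the one step of your proof that is asserted rather than proved, and it is not cosmetic: it is exactly where the interaction between the fixed $\cA$-labelling and the varying forest $\cA\setminus F$ has to be controlled.
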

\begin{proof}
It is clear that $\cA \setminus F$ is an agreement forest of~$T_1$ and~$T_2$. Hence, it is enough to show that $D\setminus F$ has a directed cycle if and only if $IG(\cA \setminus F)$ has a directed cycle.

First, let $C_1, C_2,\ldots, C_k=C_1$ be a cycle in $IG(\cA \setminus F)$. We will show that this implies that there exists a cycle in $D\setminus F$. Let $u_1, u_2,\ldots, u_k$ be the roots of the components $C_1, C_2,\ldots ,C_k$, respectively. Notice that $u_1, u_2,\ldots ,u_k$ are internal vertices of~$\cA$ and hence represented in $D$. Moreover, since these vertices are in $\cA \setminus F$, they are also in $D\setminus F$.

We prove this side of the lemma by showing that the presence of an edge $(C_i, C_{i+1})$ of $IG(\cA \setminus F)$ implies the existence of a directed path from~$u_i$ to~$u_{i+1}$ in $D\setminus F$, for $i=1,\ldots ,k-1$. By the definition of the inheritance graph, an edge $(C_i, C_{i+1})$ of $IG(\cA \setminus F)$ implies that there exists a directed path from~$u_i$ to~$u_{i+1}$ that uses an edge of~$C_i$ in at least one of the trees. It is easy to see that this directed path uses an edge,~$a$ say, of~$C_i$ that has~$u_i$ as its tail. Moreover, since~$C_i$ and~$C_{i+1}$ are components of~$\cA\setminus F$, $u_1,a$ and~$u_{i+1}$ are vertices of $D \setminus F$ and $(u_i,a)$ and $(a,u_{i+1})$ are edges of $D\setminus F$, forming a directed path from~$u_i$ to~$u_{i+1}$ in $D\setminus F$.

Now, assume that $D\setminus F$ contains a cycle. Let $u_1,...,u_k=u_1$ be a longest cycle. We will show that this implies the existence of a cycle in $IG(\cA \setminus F)$. Clearly, $u_1,\ldots ,u_k$ cannot all correspond to vertices and edges from the same component of~$\cA\setminus F$. From the assumption that $u_1,...,u_k=u_1$ is a longest cycle, it can be argued that there are at least two vertices among $u_1,\ldots ,u_k$ that are roots of components of~$\cA\setminus F$. Let $r_1,\ldots ,r_s=r_1$ denote those vertices of the cycle that correspond to roots of components. Let $C_1,\ldots ,C_s$ be the corresponding components of $\cA\setminus F$, respectively. We will show that $C_1,\ldots ,C_s$ form a cycle in $IG(\cA \setminus F)$.

For $i=1,\ldots ,s-1$, we show that there exists an edge $(C_i, C_{i+1})$ in $IG(\cA \setminus F)$. First observe that there exists a directed path from~$r_i$ to~$r_{i+1}$ in $D\setminus F$ and that, since~$D$ is bipartite, this path is of the form
\[
(r_i=v_0, e_1, v_1, e_2, \ldots ,e_t,v_t=r_{i+1})
\]
where $e_1,\ldots ,e_t$ are edges of $\cA \setminus F$ and $v_1,\ldots ,v_{t-1}$ are internal vertices of~$C_i$.

First assume $t=1$. In this case, the path is just $(r_i, e_1,r_{i+1})$. By the definition of~$D$, $tail(e_1)=r_i$ in~$C_i$ and there is a directed path from $head(\hat{e}_1)$ to~$r_{i+1}$, with~$\hat{e}_1$ some edge labelled by~$e_1$, in at least one of the two trees (such an edge $\hat{e}_1$ definitely exists because of the assumption that~$\cA$ is not more refined that necessary). This tree then contains a path from~$r_i$ to~$r_{i+1}$ that contains edge~$\hat{e}_1$ of~$C_i$. Hence there is an edge $(C_i, C_{i+1})$ in $IG(\cA \setminus F)$.

Now assume $t>1$. For~$j=1,\ldots ,t$, by the definition of~$D$, $tail(e_j)=v_{j-1}$ in~$C_i$ and there is a directed path from $head(\hat{e}_j)$ to~$v_j$, with~$\hat{e}_j$ some edge labelled by~$e_j$, in at least one of the two trees. For $j<t$, $e_j$ and~$v_j$ are both in the same component~$C_i$, implying that there is a directed path from $head(e_j)$ to~$v_j$ in~$C_i$. Hence, there is a directed path from~$r_i$ to~$v_{t-1}$ in both trees. Thus, the tree that contains a directed path from~$v_{t-1}$ to~$r_{i+1}$ also contains a directed path from~$r_i$ to~$r_{i+1}$ containing edge~$\hat{e}_t$ of~$C_i$. Hence, $(C_i, C_{i+1})$ is an edge of $IG(\cA \setminus F)$.
\end{proof}

The above lemma showed that we can turn~$\cA$ into an $\cA$-splitting by removing vertices and edges corresponding to a feedback vertex set of~$D$. We will now add weights to the vertices of~$D$ in order to enforce that an optimal feedback vertex set gives an optimal $\cA$-splitting and, moreover, that an approximate feedback vertex set gives an approximate $\cA$-splitting.

We define a weight function~$w$ on vertices of~$D$ as follows, using $\delta^+_\cA(v)$ to denote the outdegree of a vertex~$v$ in~$\cA$.
\begin{displaymath}
  w(v) =
  \begin{cases}
    \delta^+_\cA(v)-1 &\mbox{if } v \in V_V(D)\\
    1 				&\mbox{if } v \in V_E(D).
  \end{cases}
\end{displaymath}

The weight of a feedback vertex set~$F$ is defined as $w(F)= \sum_{v \in F} w(v)$.

Intuitively, the weight of each vertex~$v$ equals the number of components its removal would add to the $\cA$-splitting.

To make this precise, we need the following definition. We call a feedback vertex set~$F$ \emph{proper} if it is minimal (i.e. no proper subset of~$F$ is a feedback vertex set) and if for every vertex~$v\in V_V(D)$ at most $\delta^+_D(v)-2$ children of~$v$ in~$D$ are contained in~$F$, with $\delta^+_D(v)$ denoting the outdegree of~$v$ in~$D$. (Recall that the children of~$v$ in~$D$ are elements of~$V_E(D)$). For example, proper feedback vertex sets of the graph~$D$ in Figure~\ref{fig:reduc4} are $\{u_1\}$ and $\{v_1,w_1\}$. The idea behind this definition is that when all, or all but one, of the children of~$v$ are in~$F$, then we could just as well add~$v$ instead, which does not increase the total weight of the feedback vertex set. Hence, given any feedback vertex set~$F$, a proper feedback vertex set~$F'$ with~$w(F')\leq w(F)$ can be found in polynomial time.

\begin{lemma}
If~$F$ is a proper feedback vertex set of~$D$, then $\cA\setminus F$ is an $\cA$-splitting of size $|\cA| + w(F)$.
\end{lemma}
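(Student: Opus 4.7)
The plan is to establish $|\cA\setminus F| = |\cA| + w(F)$ via Euler's formula for forests --- the number of components of a forest equals the number of vertices minus the number of edges --- combined with two structural consequences of the properness (in particular, minimality) of $F$. First, observe that both cleanup operations preserve the number of components: removing an indegree-$0$ outdegree-$1$ vertex (together with its outgoing edge) and suppressing an indegree-$1$ outdegree-$1$ vertex (replacing its two incident edges by their concatenation) each decrease the vertex and edge counts by the same amount. It therefore suffices to count vertices and edges after simply deleting the elements of $F$ from $\cA$, before any cleanup.

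The combinatorial crux is establishing two properties of a minimal FVS $F$. \textbf{(a)} If $v\in F\cap V_V(D)$, then no outgoing edge of $v$ in $\cA$ lies in $F\cap V_E(D)$: in $D$, the only arc entering such an edge $e$ comes from $tail(e)=v$, so once $v$ is removed $e$ lies on no cycle, and minimality forces $e\notin F$. \textbf{(b)} If $v\in F\cap V_V(D)$ is a non-root internal vertex of $\cA$, then its incoming edge $e_v$ belongs to $F\cap V_E(D)$. I would prove (b) by contradiction: if $e_v\notin F$, minimality yields a cycle in $D\setminus (F\setminus\{v\})$ through $v$, say $\cdots\to e\to v\to e'\to u\to\cdots$. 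Since $head(e')$ is a child of $v$ in $\cA$, it is a descendant of $v$ in both $T_1$ and $T_2$, hence also of $head(e)$ (an ancestor of $v$ in one tree). Thus $u$ is a descendant of $head(e)$ in that tree as well, so the arc $e\to u$ already exists in $D$; rerouting past $v$ yields a cycle in $D\setminus F$, contradicting $F$ being a FVS.

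Combining (a) and (b), no two vertices of $F\cap V_V(D)$ can stand in parent-child relation in $\cA$: if $p_v,v\in F\cap V_V(D)$ with $p_v$ the parent of $v$, then (b) forces $e_v\in F\cap V_E(D)$, contradicting (a) applied to $p_v$. Consequently, the edges of $\cA$ incident to vertices in $F\cap V_V(D)$ total exactly $\sum_{v\in F\cap V_V(D)}(\delta^+_\cA(v)+\delta^-_\cA(v))$ without double-counting, and their intersection with $F\cap V_E(D)$ is, by (a) and (b), precisely $\{e_v : v\in F\cap V_V(D),\ v\text{ non-root}\}$, of size $\sum_v \delta^-_\cA(v)$. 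Inclusion-exclusion yields a total of $\sum_{v\in F\cap V_V(D)}\delta^+_\cA(v)+|F\cap V_E(D)|$ removed edges. Writing $k=|F\cap V_V(D)|$, $l=|F\cap V_E(D)|$, and letting $n_\cA,m_\cA$ denote the vertex and edge counts of $\cA$, Euler's formula then gives
\[
|\cA\setminus F| = (n_\cA-k)-\Bigl(m_\cA-\sum_{v\in F\cap V_V(D)}\delta^+_\cA(v)-l\Bigr) = |\cA|+\sum_{v\in F\cap V_V(D)}(\delta^+_\cA(v)-1)+l = |\cA|+w(F).
\]

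The main obstacle is property (b), whose rerouting step crucially uses how arcs from $V_E(D)$ to $V_V(D)$ encode descendant relationships in $T_1,T_2$ together with the faithful embedding of each component of $\cA$ in both trees; the degree condition in the definition of \emph{proper} plays only a supporting role, ensuring that $\cA\setminus F$ does not acquire dead-end internal vertices with outdegree below $2$.
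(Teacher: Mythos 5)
Your overall strategy---count components via Euler's formula for forests, noting that both cleanup operations preserve (vertices $-$ edges)---is sound and genuinely different from the paper's argument, which instead shows that $F$ is ``upward closed'' within each component of $\cA$, so that $\cA\setminus F$ arises by repeatedly deleting either the root of a component or a set of edges leaving a root, each such step adding exactly the corresponding weight. Your property (a) is correct and is exactly the paper's first observation (every cycle of $D$ through $e$ passes through $tail(e)$, the unique in-neighbour of $e$). The gap is property (b). It asserts more than is true: the paper only establishes the dichotomy that for a non-root $v\in F\cap V_V(D)$ \emph{either} the edge entering $v$ \emph{or} the parent of $v$ lies in $F$, and not both. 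A minimal feedback vertex set may well contain a vertex together with its parent while omitting the connecting edge---this is precisely the configuration the paper's description ``repeatedly removing the root of a component'' accommodates, since once a root is deleted its children become roots of new components and may themselves need to be deleted. Your rerouting argument for (b) also breaks down on a technical point: the arc $e\to v$ of $D$ is witnessed by reachability in \emph{some} tree $T_i$, and the arc $e'\to u$ by reachability in some tree $T_j$; when $u$ lies in a different component of $\cA$ than $v$, nothing forces $i=j$, and if $i\neq j$ the arc $e\to u$ need not exist in $D$, so the cycle cannot be shortcut past $v$. Consequently your claim that no two vertices of $F\cap V_V(D)$ stand in parent--child relation---and with it the ``no double-counting'' step of your inclusion--exclusion---is unjustified.

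The count itself can be salvaged, but only by proving the correct dichotomy rather than (b). Indeed, each non-root $v\in F\cap V_V(D)$ contributes the same $-1$ correction to the removed-edge total in either case: if $e_v\in F$ then $e_v$ lies in the overlap between $F\cap V_E(D)$ and the edges incident to $F\cap V_V(D)$, while if instead $p_v\in F$ then $e_v$ has both endpoints in $F\cap V_V(D)$ and is double-counted once in $\sum_v(\delta^+_\cA(v)+\delta^-_\cA(v))$; either way the total number of deleted edges is $\sum_{v\in F\cap V_V(D)}\delta^+_\cA(v)+|F\cap V_E(D)|$ and your final computation goes through. But establishing ``$e_v\in F$ or $p_v\in F$ for every non-root $v\in F\cap V_V(D)$'' (and the analogous statement for edges of $F\cap V_E(D)$ not leaving a root) requires the paper's upward-rerouting argument---a cycle through $e$ but not through the edge $e'$ above it can be rerouted through $e'$ and $tail(e')$---which your proposal does not supply. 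Without the dichotomy, a non-root $v\in F$ with neither $e_v$ nor $p_v$ in $F$ would contribute $\delta^+_\cA(v)+1$ deleted edges instead of $\delta^+_\cA(v)$, and the claimed size $|\cA|+w(F)$ would fail. As written, therefore, the proof has a genuine gap.
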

\begin{proof}
For an edge~$e$ of~$\cA$, we use $tail(e)$ and $head(e)$ to refer to the tail and head of~$e$ in~$\cA$, respectively.

Consider a vertex of~$F$ corresponding to an edge~$e$ of~$\cA$. Then, every cycle in~$D$ that contains~$e$ also contains $tail(e)$. Hence, since~$F$ is minimal, $tail(e)$ is not contained in~$F$. Moreover, suppose that $tail(e)$ is not the root of a component of~$\cA$ and let~$e'$ be the edge with $head(e')=tail(e)$. Then, for every cycle in~$D$ containing~$e$ but not~$e'$ (and hence containing $tail(e)$ but not $tail(e')$), replacing~$e$ and $tail(e)$ by~$e'$ and $tail(e')$ gives again a directed cycle in~$D$. Hence, since~$F$ is minimal, it follows that either~$e'$ or~$tail(e')$ is contained in~$F$, but not both.

Now consider a vertex of~$F$ corresponding to a vertex~$v$ of a component~$C$ of~$\cA$. Then, by the previous paragraph, no edge~$e$ with $tail(e)=v$ is contained in~$F$. Moreover, if~$v$ is not the root of~$C$ and~$e'$ is the edge with $head(e')=v$, then, as before, either~$e'$ or~$tail(e')$ is contained in~$F$, but not both.

To summarize the previous two paragraphs, if~$F$ contains a vertex corresponding to a vertex~$v$ of~$\cA$ that is \emph{not} the root of its component, then~$F$ also contains either the edge entering~$v$ or the parent of~$v$. Similarly, if~$F$ contains a vertex corresponding to an edge~$e$ of~$\cA$ that is \emph{not} leaving the root of its component, then~$F$ also contains either the edge entering~$tail(e)$ or the parent of~$tail(e)$. Informally speaking, this means that if you remove something from a component, you also remove everything above it.

More formally, it follows that $\cA\setminus F$ can be obtained from~$\cA$ by, repeatedly, either removing the root of a component or removing a subset of the edges leaving the root of a component. Moreover, if we remove a subset of the edges leaving the root of a component, at least two of these edges are not removed, because~$F$ is proper.

Removing edges and vertices from~$\cA$ in this way, it is easy to see that the number of components is increased by~$w(F)$.
\end{proof}

To complete the proof, let~$\cA$ be a $c$-approximation to \maf, $F$ a minimal feedback vertex set that is a $d$-approximation to weighted \dfvs on~$D$ and~$F^*$ an optimal  solution to weighted \dfvs on~$D$, then we have:
\begin{align*}
 |\cA\setminus F| - 1 & = |\cA| + w(F) - 1\\
				&\leq |\cA| + d\cdot w(F^*) - 1\\
  				&\leq d(|\cA| + w(F^*) - 1)\\
 				&= d(OptSplit(\cA)-1) \\
 				&\leq d(c+3)MAAF(T_1,T_2).
\end{align*}

We have thus shown how to construct an agreement forest that is a $d(c+3)$-approximation to \maaf and with that we conclude the proof of Theorem~\ref{thm:maafapprox}.

\section{Nonbinary MAF}

In the first part of this section we present a 4-approximation algorithm for \maf, proving Theorem \ref{thm:maf}. The performance analysis leads almost straightforwardly to a fixed parameter tractability result, which we present in the second part.

\subsection{Approximation of nonbinary MAF}

Let~$T_1$ and~$T_2$ be the input trees to \maf. They do not have to be binary. We will construct a forest by ``cutting''~$T_2$. A ``cut'' operation of $T_2$ consists of removing an edge (and suppressing indegree-1 outdegree-1 vertices) or of first refining a vertex (with outdegree greater than~2) and then removing an edge (and suppressing indegree-1 outdegree-1 vertices), see Figure~\ref{fig:cut}.

\begin{figure}
\centering
\includegraphics[scale=.8]{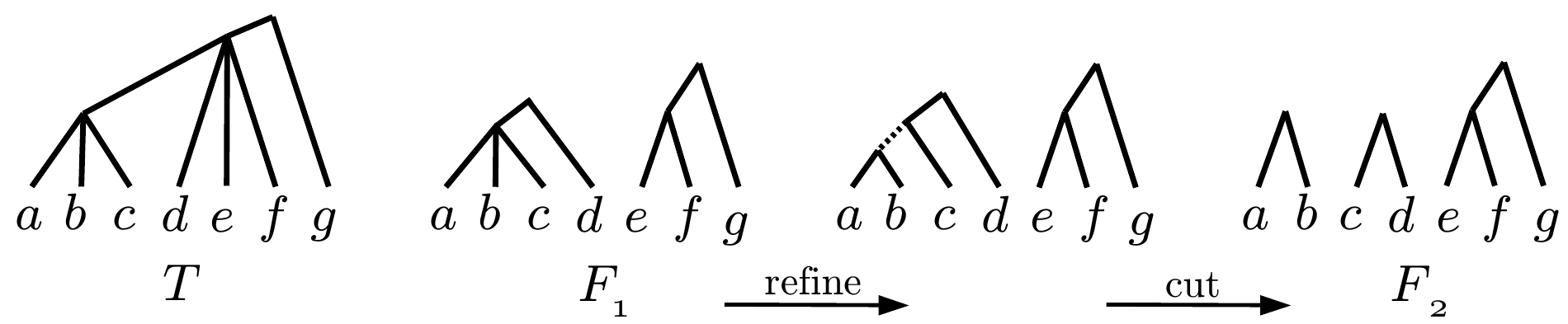}
\caption{A tree~$T$ and two forests~$F_1$ and~$F_2$ for~$T$. Forest~$F_2$ can be obtained from~$F_1$ by refining the parent of~$a,b$ and~$c$ and subsequently removing an edge and suppressing an indegree-1 outdegree-1 vertex.\label{fig:cut}}
\end{figure}

Let~$F_2$ be the forest obtained by cutting~$T_2$. In each iteration, we further cut~$F_2$ until at some point~$F_2$ becomes a forest also of~$T_1$. At that point, we have successfully obtained an agreement forest for~$T_1$ and~$T_2$ and we terminate the algorithm.

We describe an algorithm to determine which edges to cut by defining an iteration of the algorithm. Suppose that at the start of the iteration we have an (intermediate) forest~$F_2$. There are two main cases. In each case, the algorithm will make at most~4 cuts, and we will show that at least one of these cuts is unavoidable, thus showing that in each case a 4-approximation is attained.

Take an arbitrary internal vertex~$u$ of~$T_1$ with the property that all its children are leaves. Such a vertex clearly exists in any tree. Let~$C$ be the set of children of~$u$ in~$T_1$ and~$\bar{C}$ the set of all leaves that are not in~$C$.

First the algorithm checks the following three simple cases in the given order.

\smallskip

\noindent {\bf Case 0a.} There exist $c_1,c_2\in C$ that have a common parent in~$F_2$.

In this case, we collapse the subtree on $c_1$ and~$c_2$ to a single leaf in both~$T_1$ and~$F_2$. To be precise, we do the following in both~$T_1$ and~$F_2$. If~$c_1$ and~$c_2$ have no siblings, we delete~$c_1$ and~$c_2$ and label their former parent by~$\{c_1,c_2\}$. If~$c_1$ and~$c_2$ do have siblings, we delete~$c_1$ and replace label~$c_2$ by label~$\{c_1,c_2\}$.

\smallskip

\noindent {\bf Case 0b.} Some leaf~$c\in C$ is an isolated vertex in~$F_2$.

In this case, we remove~$c$ from both~$T_1$ and~$F_2$ and suppress any resulting outdegree-1 vertices. At the end, after recursively having computed an agreement forest, we add an isolated vertex for~$c$.

\smallskip

\noindent {\bf Case 0c.} The leaves in~$C$ are all in different components of~$F_2$. 

In this case, we remove all~$c\in C$ from both~$T_1$ and~$F_2$ and suppress any resulting outdegree-1 vertices. At the end, after recursively having computed an agreement forest, we add isolated vertices for all~$c\in C$.

\smallskip

Correctness of the procedure followed in the first two cases is obvious. To prove correctness in Case~0c, let~$F$ be an agreement forest of~$T_1$ and~$T_2$ that can be obtained by cutting~$F_2$. Since the leaves in~$C$ are all in different components of~$F_2$, they are all in different components of~$F$. Observe that any component of~$F$ that contains an element of~$C$ and an element of~$\bar{C}$ has to use the edge entering~$u$ in~$T_1$. It follows that at most one component of~$F$ contains an element of~$C$ and an element of~$\bar{C}$. Since the elements of~$C$ are all in different components of~$F$, it follows that at most one element from~$C$ is \emph{not} a singleton in~$F$. Hence, at most one of the cuts made in this step is avoidable. At least~2 cuts were made because $|C|\geq 2$ and no~$c\in C$ was already an isolated vertex in~$F_2$ by Case 0b. It follows that at least half of the cuts made were unavoidable.

If none of the above cases applies, the algorithm picks~$c_1,c_2$ from~$C$ in such a way that~$c_1$ and~$c_2$ are in the same component~$A_2$ of~$F_2$ and such that their lowest common ancestor in~$A_2$ is at maximum distance from the root of the component. Moreover, if there exists such a pair for which neither of~$c_1$ and~$c_2$ is a child of their lowest common ancestor, we pick such a pair first.

\smallskip

\noindent {\bf Case 1.} Neither of~$c_1$ and~$c_2$ is a child of their lowest common ancestor~$v$ in~$F_2$.

Let~$p_1$ be the parent of~$c_1$ and~$p_2$ the parent of~$c_2$ in~$F_2$. (We have $p_1\neq p_2$ because we are not in Case 0a.) Let~$S_1$ be the set of all leaves that are descendants of~$p_1$ except for~$c_1$. Similarly, let~$S_2$ be the set of all leaves that are descendants of~$p_2$ except for~$c_2$. Finally, let~$R$ be the set of other leaves of component $A_2$, i.e. leaves that are not in $S_1\cup S_2\cup\{c_1,c_2\}$. We cut~$A_2$ by creating separate components for~$c_1,c_2,S_1,S_2$ and~$R$, thus making four cuts, see Figure~\ref{fig:case1}.

\begin{figure}
    \centering
    \includegraphics[scale=.7]{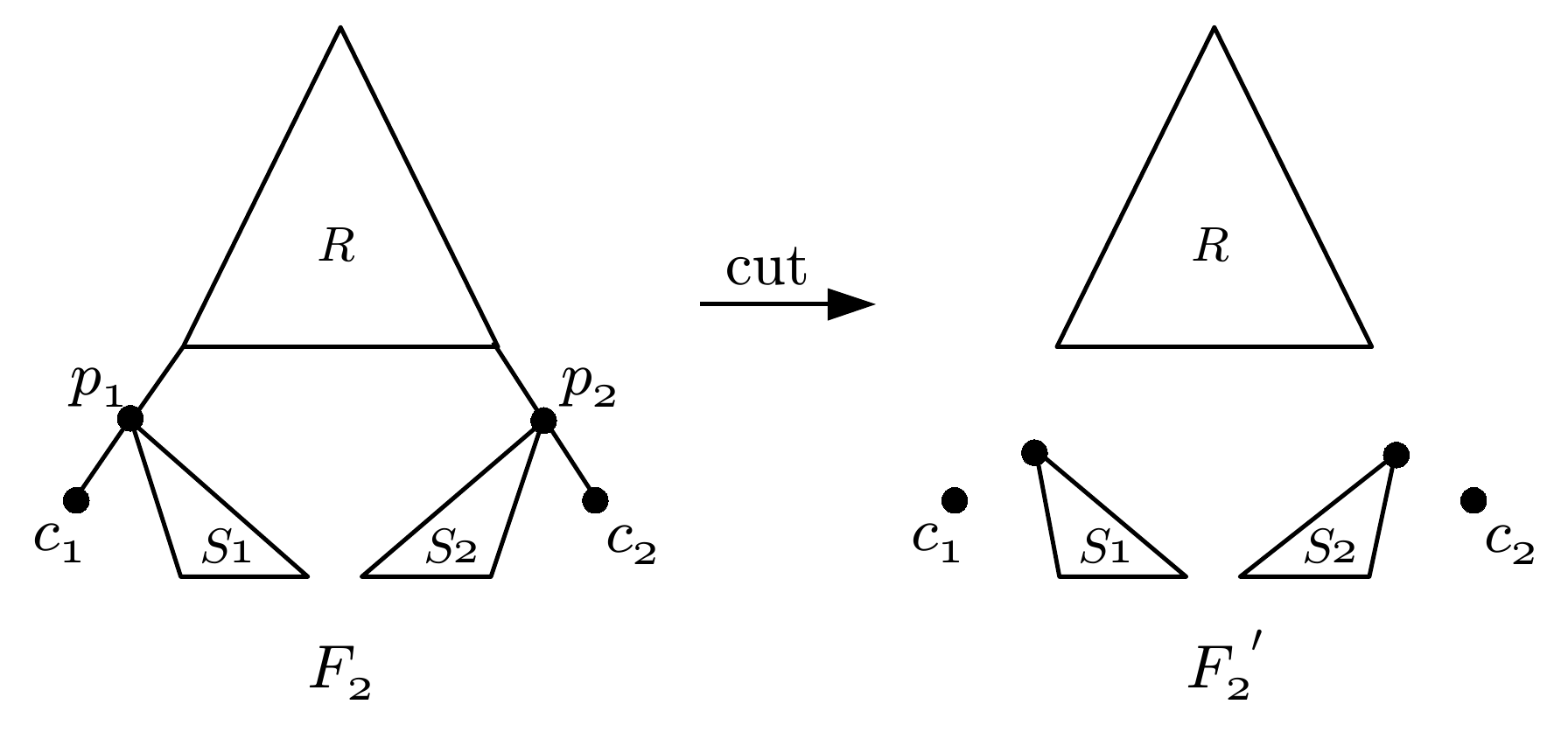}
    \caption{Case 1: none of~$c_1$ and~$c_2$ is a child of their lowest common ancestor. We cut~$F_2$ into~$F_2'$ by creating separate components for~$c_1,c_2,S_1,S_2$ and~$R$.\label{fig:case1}
    }
\end{figure}

We claim that at least one of these four cuts was unavoidable. Let~$F$ be an agreement forest of~$T_1$ and~$T_2$ with a minimum number of components. If any of~$c_1,c_2$ is a singleton in~$F$ then the cut that removed the edge entering that vertex was unavoidable. Hence, we assume that~$c_1$ and~$c_2$ are both non-singletons in~$F$. Suppose~$c_1$ and~$c_2$ are in the same component of~$F$. Because of the choice of~$c_1$ and~$c_2$ having their lowest common ancestor furthest from the root of the component,~$S_1,S_2\subset \bar{C}$; they contain no element of~$C$. Hence cutting off both $S_1$ and $S_2$ is unavoidable.

Hence, we assume that~$c_1$ and~$c_2$ are in different, non-singleton components of~$F$. As argued before, in justifying Case~0c, at most one of~$c_1$ and~$c_2$ can be in a component with elements from~$\bar{C}$. Moreover, as also argued before, $S_1,S_2\subset \bar{C}$. Therefore, at most one of~$c_1$ and~$c_2$ can be in a component with elements from $S_1\cup S_2$. W.l.o.g. suppose $c_1$ is not in a component with elements from $S_1\cup S_2$. Since~$c_1$ is not a singleton component, it is contained in a component which uses the edge of~$A_2$ entering~$p_1$ and the edge from~$p_1$ to~$c_1$, but none of the other edges leaving~$p_1$. Hence, the cut cutting off~$S_1$ is unavoidable. Similarly, if~$c_2$ is not in a component with elements from $S_1\cup S_2$, then the cut cutting off~$S_2$ is unavoidable. Thus, always at least one of the four cuts was unavoidable.

\smallskip

\noindent {\bf Case 2.} Either $c_1$ or $c_2$ is a child of their lowest common ancestor~$v$ in~$F_2$. 

Let~$c_1,c_2$ be any such pair. Let again~$p_1$ be the parent of~$c_1$ and~$p_2$ the parent of~$c_2$ in~$F_2$. Assume without loss of generality that~$p_1$ is the lowest common ancestor of~$c_1$ and~$c_2$. Let~$S_2$ contain all leaves that are descendants of~$p_2$ except for~$c_2$. Let~$S_1$ contain all leaves that are descendants of~$p_1$ except for $c_1,c_2$ and the leaves in~$S_2$. Let~$R$ contain all remaining leaves. We cut~$F_2$ by creating separate components for~$c_1,c_2,S_1,S_2$ and~$R$, thus making four cuts, see Figure~\ref{fig:case2}.

\begin{figure}
    \centering
    \includegraphics[scale=.7]{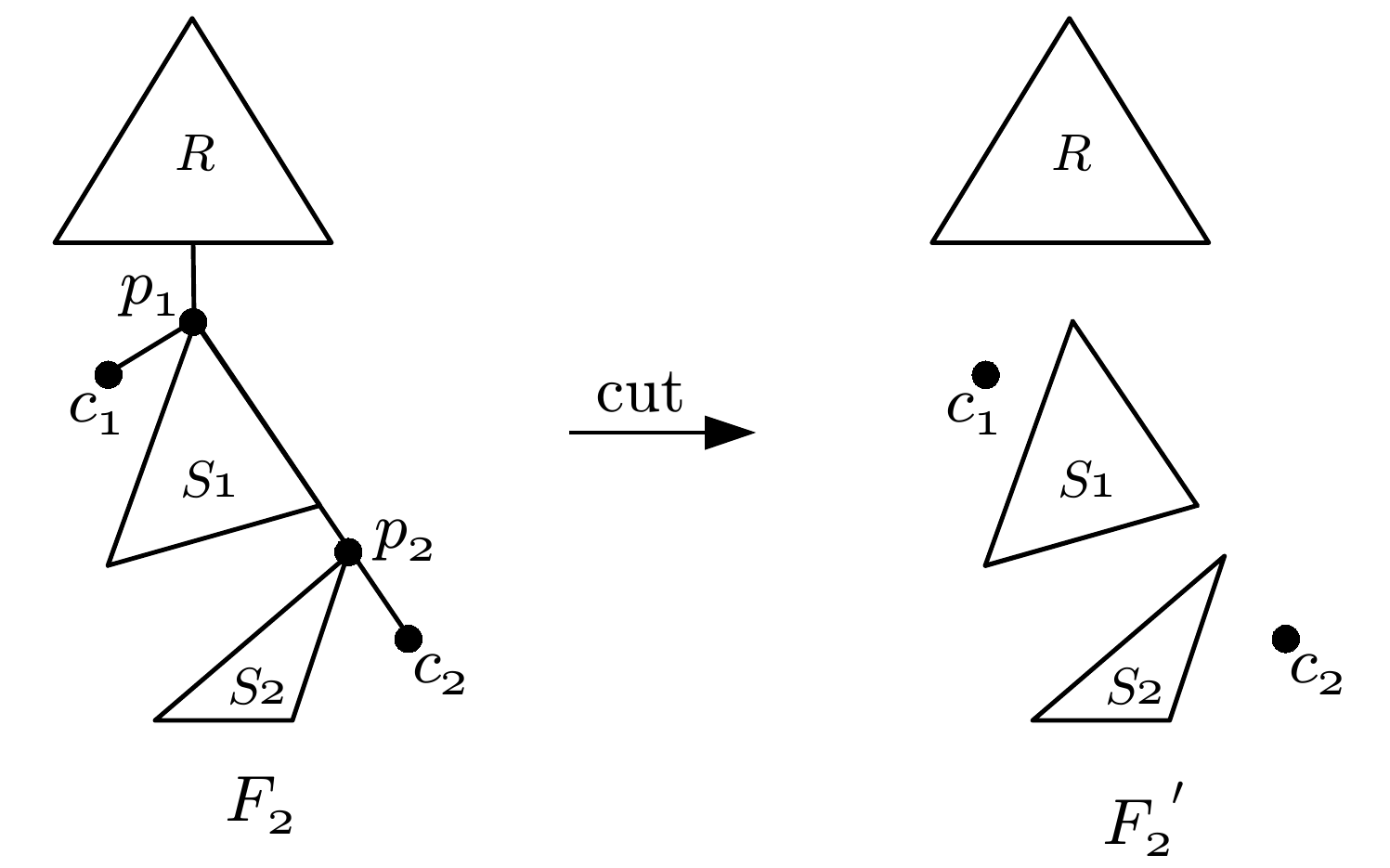}
    \caption{Case 2: $c_1$ is a child of the lowest common ancestor of~$c_1$ and~$c_2$. We cut~$F_2$ into~$F_2'$ by creating separate components for~$c_1,c_2,S_1,S_2$ and~$R$.
    \label{fig:case2}}
\end{figure}

We claim that also in this case at least one of the four cuts was unavoidable. Let~$F$ again be an agreement forest of~$T_1$ and~$T_2$ with a minimum number of components. We can argue as before that we can restrict attention to the situation in which~$c_1$ and~$c_2$ are non-singletons and belong to different components. It is also again true that~$S_1$ and~$S_2$ cannot contain elements from~$C$. To see this, first note that no element~$c_3$ of~$C\setminus\{c_1\}$ can be a child of~$p_1$ because then~$c_1,c_3\in C$ would have a common parent in~$F_2$, which is a Case~0a situation. Moreover, no~$c_3\in C\setminus\{c_1,c_2\}$ can be reached from~$p_1$ by a directed path with at least one internal vertex that does not belong to the path from $p_1$ to~$c_2$, because then $c_2,c_3$ would conform to Case~1. Finally, as before, no~$c_3\in C\setminus\{c_1,c_2\}$ can be reached from an internal vertex of the path from~$p_1$ to~$c_2$ because then~$c_3$ and~$c_2$ would have a lowest common ancestor further away from the root than~$p_1$. We conclude that~$S_1$ and~$S_2$ contain no elements from~$C$.

As in Case~1, it follows that at most one of~$c_1$ and~$c_2$ is in a component with elements from~$S_1\cup S_2$. Also, similar to the arguments in Case 1, if $c_1$ is not in a component with elements from~$S_1\cup S_2$ then that component uses the edge of~$A_2$ entering~$p_1$ and the edge from~$p_1$ to~$c_1$, but no other edges leaving~$p_1$. Hence, the cutting off $S_1\cup S_2\cup \{c_2\}$ is unavoidable. If~$c_2$ is not in a component with elements from~$S_1\cup S_2$, then cutting off~$S_2$ is unavoidable.

Hence, we conclude that in each case at least one of the four cuts is unavoidable, and the algorithm thus yields a 4-approximation.

\subsection{An FPT algorithm for nonbinary MAF}

In this section, we show that there exists an $O(4^k \poly(n))$ time algorithm for nonbinary \maf, i.e. we prove Theorem~\ref{thm:fpt}.

The algorithm follows the same ideas as the approximation algorithm in the proof of Theorem~\ref{thm:maf}. Cases~0a and 0b are executed in exactly the same way. In Case 0c, instead of removing all~$c\in C$, we pick~$c_1,c_2\in C$ arbitrarily and branch into two subproblems. In one subproblem~$c_1$ is removed and in the other subproblem~$c_2$ is removed. After recursively computing an agreement forest, the removed leaf is added as an isolated vertex. This step is correct since, by the proof of Theorem~\ref{thm:maf}, in any maximum agreement forest at least one of~$c_1$ and~$c_2$ is an isolated vertex. For, Cases~1 and~2, instead of making four cuts, we branch into four subproblems, one for each possible cut. By the proof of Theorem~\ref{thm:maf}, at least one of the four cuts is unavoidable, and hence at least one subproblem has the same optimum as the original problem.

It remains to analyse the running time. In each step we branch into at most four subproblems. For each subproblem we make one cut, and hence we reduce the parameter~$k$ by one. Therefore, at most~$4^k$ subproblems are created. For each subproblem, we need only time polynomial in~$n$. This concludes the proof.

\section{Conclusions and open problems}
We have given improved FPT and polynomial-time approximation algorithms for nonbinary MAF, and demonstrated that, as in the binary case, algorithms for MAF and DFVS can be combined to
yield nontrivial approximation guarantees for nonbinary MAAF. A number of interesting open problems remain. Firstly, the best known polynomial-time approximation algorithms for binary MAF
have a factor of 3, and for nonbinary this is now 4. Might it be that the binary and nonbinary variants are equally approximable, or is the nonbinary variant in some sense strictly more difficult
to approximate? For nonbinary MAAF we have shown how to achieve an approximation factor of $d(c+3)$, but for binary the corresponding expression is $d(c+1)$, this gap is also something
that needs to be explored.

On the software side, we have implemented both our MAF algorithms and made them publicly available~\cite{mafprogram}. We can report the following provisional performance results. The FPT algorithm solves instances with~$k\leq 14$ within a few minutes. The approximation algorithm solves instances with~$n\leq 500$ within seconds (and possibly larger instances too). The highest approximation-factor encountered on the inputs we tried was~$2.5$ (which raises the question whether the 4-approximation analysis given in this article can actually be sharpened). For nonbinary MAAF we have not yet implemented the $d(c+3)$ algorithm but are planning to do so. As in \cite{practicalCycleKiller}, it should be possible to obtain $d=1$ by using Integer Linear Programming (ILP) to solve DFVS exactly.

\section*{Acklowledgements}
We are grateful to Simone Linz for many useful discussions on the topic of this paper.

\bibliography{vanIerselKelkLekicStougie}{}
\bibliographystyle{plain}

\end{document}